\theoremstyle{plain}
\newtheorem{Thm}{Theorem}[section]
\newtheorem{Prop}[Thm]{Proposition}
\newtheorem{Lem}{Lemma}[section]
\newtheorem{Cor}{Corollary}[Thm]
\theoremstyle{definition}
\newtheorem{Def}{Definition}[section]
\newtheorem{Rem}{\textnormal{\textbf{Remark}}}
\newtheorem*{acknowledgement}{\textnormal{\textbf{Acknowledgement}}}
\theoremstyle{remark}
\newtheorem{Exm}{Example}
\numberwithin{equation}{subsection}
\begin{document}

\title[]{Note on the characteristic rank of vector bundles}

\author{Aniruddha C. Naolekar \and Ajay Singh Thakur}

\newcommand{\acr}{\newline\indent}

\address{Stat-Math UNit \acr Indian Statistical Institute \acr 8th Mile, Mysore Road, RVCE Post \acr Bangalore 560059 \acr INDIA.}

\email{ani@isibang.ac.in, thakur@isibang.ac.in}

\subjclass[2010]{57R20}

\keywords{Stiefel-Whitney class, characteristic rank, Dold manifold, Moore space, stunted projective space.}

\begin{abstract}
We define the notion of characteristic rank, $\mathrm{charrank}_X(\xi)$, of a real vector bundle $\xi$ over a connected finite $CW$-complex $X$. This is a bundle-dependent version of the notion of characteristic rank introduced by J\'{u}lius Korba\v{s} in 2010. We obtain bounds for the cup length of manifolds in terms of the characteristic rank of vector bundles generalizing a theorem of Korba\v{s} and compute the characteristic rank of vector bundles over the Dold manifolds, the Moore spaces and the stunted projective spaces amongst others. 
\end{abstract}

\maketitle

\section{Introduction}

Recently, J. Korba\v{s} \cite{korbas} has introduced a new homotopy invariant, called the characteristic rank, of a  connected closed smooth manifold $X$. The characteristic rank of a connected closed smooth $d$-manifold $X$, denoted by $\mathrm{charrank}(X)$, is the largest integer $k$, $0\leq k \leq d$, such that every cohomology class $x\in H^j(X;\mathbb Z_2)$, $0\leq j\leq k$ is a polynomial in the Stiefel-Whitney classes of (the tangent bundle of) $X$. 

Apart from being an interesting question in its own right, part of the motivation for computing the characteristic rank comes from a theorem of Korba\v{s} (\cite{korbas}, Theorem 1.1), 
where the author has described a bound for the $\mathbb Z_2$-cup-length of (unorientedly) null cobordant closed smooth manifolds in terms of their charateristic rank. More specifically, Korba\v{s} has proved the following. 

\begin{Thm}\label{korbastheorem} {\em (\cite{korbas}, Theorem\,1.1)}
Let $X$ be a closed smooth connected $d$-dimensional manifold unorientedly
cobordant to zero. Let $\widetilde{H}^r(X;\mathbb Z_2)$, $r < d$, be the first nonzero reduced cohomology group
of $X$. Let $z$ ($z < d -1$) be an integer such that for $j\leq z$ each element of $H^j(X;\mathbb Z_2)$ can
be expressed as a polynomial in the Stiefel-Whitney classes of the manifold $X$. Then we have that 
$$\mathrm{cup}(X)\leq 1+\frac{d-z-1}{r}.$$
\end{Thm}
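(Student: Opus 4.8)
The plan is to combine Poincar\'e duality with the vanishing of Stiefel--Whitney numbers on a null cobordant manifold, the latter reformulated as the statement that the generator $\tau$ of $H^d(X;\mathbb Z_2)\cong\mathbb Z_2$ is \emph{not} expressible as a polynomial in the Stiefel--Whitney classes of $X$. Indeed, were $\tau$ such a polynomial, then $\langle\tau,[X]\rangle$ would be a $\mathbb Z_2$-linear combination of Stiefel--Whitney numbers of $X$ (pairing each degree-$d$ monomial against $[X]$) and hence $0$ by the Pontryagin--Thom theorem, contradicting $\langle\tau,[X]\rangle=1$.

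First I would fix a product realizing the cup length: set $n=\mathrm{cup}(X)$ and choose $u_1,\dots,u_n$ of positive degree with $u_1\cdots u_n\neq 0$. Since $\widetilde H^j(X;\mathbb Z_2)=0$ for $0<j<r$, each $\deg u_i\geq r$, so the product lies in degree $t:=\sum_i\deg u_i$ with $r\le t\le d$. Next I would invoke Poincar\'e duality over $\mathbb Z_2$: as $u_1\cdots u_n\neq 0$, the nondegenerate pairing $H^t(X;\mathbb Z_2)\times H^{d-t}(X;\mathbb Z_2)\to H^d(X;\mathbb Z_2)$ yields a class $w\in H^{d-t}(X;\mathbb Z_2)$ with $u_1\cdots u_n\,w=\tau$. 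If $t=d$ we simply take $w=1$; if $t<d$ then $w\neq 0$ and, again because the reduced cohomology vanishes in degrees $1,\dots,r-1$, we must have $\deg w\geq r$. In either case $\tau$ is exhibited as a product of $N\geq n$ classes, each of degree $\geq r$, whose degrees sum to $d$.

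The crux is the cobordism input. If every factor in this product had degree $\leq z$, then by the hypothesis on $z$ each factor would be a polynomial in the Stiefel--Whitney classes of $X$, and hence so would their product $\tau$, contradicting the reformulated null cobordism fact above. Therefore at least one factor has degree $\geq z+1$. Counting degrees, the remaining $N-1\geq n-1$ factors each contribute at least $r$, so
$$d=\sum(\text{degrees})\geq (z+1)+(N-1)r\geq (z+1)+(n-1)r,$$
which rearranges to $\mathrm{cup}(X)=n\leq 1+\tfrac{d-z-1}{r}$. I expect the main obstacle to be the careful bookkeeping around the Poincar\'e-dual factor $w$ --- specifically ruling out $0<\deg w<r$ and ensuring that the single factor of degree $\geq z+1$ and the $n-1$ remaining factors of degree $\geq r$ are accounted for disjointly --- rather than the (standard) duality and cobordism ingredients themselves.
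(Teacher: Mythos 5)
Your proof is correct and follows essentially the same route as the paper's: Poincar\'e duality to realize a nonzero product in top degree, the vanishing of Stiefel--Whitney numbers of a boundary to force at least one factor of degree $\geq z+1$, and a degree count. The only cosmetic difference is in the final bookkeeping --- you isolate a single factor of degree $\geq z+1$ and bound the remaining $N-1$ factors by $r$ each, whereas the paper splits the factors into low- and high-degree blocks and optimizes a linear function of $\deg(\alpha)$ --- but both yield the identical bound $1+\frac{d-z-1}{r}$.
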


Recall that the $\mathbb Z_2$-cup-length, denoted by $\mathrm{cup}(X)$, of a space $X$ is the largest integer $t$ such that there exist classes $x_i\in H^*(X;\mathbb Z_2)$, $\mathrm{deg}(x_i)\geq 1$, such that the cup product $x_1\cdot x_2\cdots x_t\neq 0$. We mention in passing that the $\mathbb Z_2$-cup-length is well known to have connections with the Lyusternik-Shnirel'man category of the space. 

With the computation of the characteristic rank in mind,   
Balko and Korba\v{s} \cite{balkokorbas} obtained bounds for the characteristic rank of manifolds $X$ which occur as total spaces of smooth fiber bundles with fibers totally non-homologous to zero, and also in the situation where, additionally, $X$ itself is null cobordant (see \cite{balkokorbas}, Theorems 2.1 and 2.2). 

It is useful to think of the characteristic rank of a manifold as the characteristic rank ``with respect to the tangent bundle'' and introduce bundle-dependency 
as in the definition below. 

\begin{Def}\label{def}
Let $X$ be a connected, finite $CW$-complex and $\xi$ a real vector bundle over $X$. The characteristic rank of the vector bundle $\xi$ over $X$, denoted by $\mathrm{charrank}_X(\xi)$, is by definition 
the largest integer $k$, $0\leq k\leq \mathrm{dim}(X)$, such that every cohomology class $x\in H^j(X;\mathbb Z_2)$, $0\leq j\leq k$, is a polynomial in the  Stiefel-Whitney classes $w_i(\xi)$ of $\xi$. The upper characteristic rank of $X$, denoted by $\mathrm{ucharrank}(X)$, is the maximum of $\mathrm{charrank}_X(\xi)$ as $\xi$ varies over all 
vector bundles over $X$. 
\end{Def}

Thus, if $X$ is a connected closed smooth manifold, then $\mathrm{charrank}_X(TX)=\mathrm{charrank}(X)$ where $TX$ is the tangent bundle of $X$. Note that if $X$ and $Y$ are homotopically equivalent closed connected smooth manifolds, then $\mathrm{ucharrank}(X)=\mathrm{ucharrank}(Y)$.

In this note we discuss some general properties of $\mathrm{charrank}(\xi)$ and give a complete description of $\mathrm{charrank}_X(\xi)$ of vector bundles $\xi$ over $X$ when $X$ is: a product of spheres, the real  and complex projective spaces, the Dold manifold $P(m,n)$, the Moore space $M(\mathbb Z_2,n)$ and the stunted projective spaces $\mathbb R\mathbb P^n/\mathbb R\mathbb P^m$. We now briefly describe the contents of this note. 

For a connected finite $CW$-complex $X$, let $r_X$ denote the smallest positive integer such that $\widetilde{H}^{r_X}(X;\mathbb Z_2)\neq 0$. 
In the case that such an integer does not exist, that is, all the reduced cohomology groups $\widetilde{H}^i(X;\mathbb Z_2)=0$, $1\leq i\leq \dim (X)$, we set $r_X=\dim (X)+1$. In any case, $r_X\geq 1$. 

Making the definition of the characteristic rank bundle-dependent gives the following theorem which is a straighforward generalisation of Theorem\,\ref{korbastheorem}. In this form the theorem yields sharper bounds on the cup-length in certain cases (see Examples\,\ref{4.6} and \ref{4.7} below). We shall prove the following. 

\begin{Thm}\label{zeroththeorem}
Let $X$ be a connected closed smooth $d$-manifold. Let $\xi$ be a vector bundle over $X$ satisfying the following:
\begin{itemize}
\item there exists $k$, $k\leq \mathrm{charrank}_X(\xi)$, such that every monomial $$w_{i_1}(\xi)\cdots w_{i_r}(\xi), 0\leq i_t\leq k,$$ of total degree $d$ is zero.
\end{itemize}
Then, 
$$\mathrm{cup}(X)\leq 1+\frac{d-k-1}{r_X}.$$
\end{Thm}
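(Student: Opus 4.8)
The plan is to argue by contradiction, bounding the number of factors in a nonzero product of positive-degree classes by a degree comparison. Write $t=\mathrm{cup}(X)$ and fix $x_1,\dots,x_t\in\widetilde H^*(X;\mathbb Z_2)$ with $\deg x_i\geq 1$ and $x_1\cdots x_t\neq 0$. Let $R$ be the subring of $H^*(X;\mathbb Z_2)$ generated by $w_0(\xi),\dots,w_k(\xi)$, and let $R^d$ be its degree-$d$ part. Two observations drive the proof. First, the hypothesis on monomials says exactly that $R^d=0$, since every element of $R^d$ is a sum of monomials $w_{i_1}(\xi)\cdots w_{i_r}(\xi)$ with $0\le i_t\le k$ of total degree $d$. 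Second, because $k\le\mathrm{charrank}_X(\xi)$, any class of degree $j\le k$ is a polynomial in the $w_i(\xi)$; as each monomial occurring has total degree $j\le k$, only classes $w_i(\xi)$ with $i\le k$ can appear, so \emph{every class of degree $\le k$ already lies in $R$}. Finally, since each $x_i$ is a nonzero reduced class, $\deg x_i\ge r_X$.

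Suppose, for contradiction, that $t>1+\frac{d-k-1}{r_X}$. As all quantities are integers, this forces $(t-1)r_X\ge d-k$. I would then split into two cases. In the first, some factor, say $x_t$, has $\deg x_t\ge k+1$; using $\deg x_i\ge r_X$ for the other factors,
\[
\deg(x_1\cdots x_t)\ \ge\ (k+1)+(t-1)r_X\ \ge\ (k+1)+(d-k)\ =\ d+1,
\]
which is impossible, as a nonzero class has degree at most $d=\dim X$.

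The remaining, more delicate case is when every factor has degree $\le k$. Then each $x_i$ lies in $R$ by the second observation, so $P:=x_1\cdots x_t\in R$, of some degree $D\le d$. The crux is to push $P$ to the top dimension while staying inside $R$. Since $D\ge t\,r_X\ge (t-1)r_X+r_X\ge (d-k)+r_X$, the complementary degree satisfies $d-D\le k-r_X\le k$. By mod $2$ Poincar\'e duality for the closed connected manifold $X$, there is a class $y\in H^{d-D}(X;\mathbb Z_2)$ with $Py\neq 0$ in $H^d(X;\mathbb Z_2)\cong\mathbb Z_2$. Because $\deg y=d-D\le k$, the second observation applies again to give $y\in R$ (when $d-D=0$, $y$ is the unit $w_0(\xi)\in R$). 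Hence $Py\in R^d=0$, contradicting $Py\neq 0$. Both cases being impossible, we conclude $\mathrm{cup}(X)\le 1+\frac{d-k-1}{r_X}$; the degenerate case $\mathrm{cup}(X)=0$ is immediate since $k\le d$ makes the right-hand side nonnegative.

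I expect the second case to be the main obstacle. The naive estimate $\deg x_i\ge r_X$ alone only yields $t\le d/r_X$, and the improvement to the stated bound rests entirely on the fact that the Poincar\'e-dual class $y$ needed to reach the fundamental class has degree at most $k$, so it too lies in $R$ and the whole product falls into $R^d$, which vanishes by hypothesis. Verifying the degree bookkeeping that makes $\deg y\le k$ (and checking the boundary situations) is where the argument must be carried out carefully, but no deeper input than Poincar\'e duality and the definition of characteristic rank is required.
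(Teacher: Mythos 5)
Your proof is correct and follows essentially the same route as the paper's: both hinge on splitting the factors of a maximal nonzero product according to whether their degree is at most $k$ (hence they lie in the subring generated by $w_0(\xi),\dots,w_k(\xi)$, whose degree-$d$ part vanishes by hypothesis) or at least $k+1$, and then counting degrees against $r_X$. The only real difference is organizational: you run the count as a contradiction in two cases and invoke Poincar\'e duality at the end of your second case to push the low-degree product into the vanishing top-degree part of that subring, whereas the paper applies Poincar\'e duality at the outset to place the maximal product in top degree and then derives the bound by a direct inequality chain.
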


We note that if $X$ is an unoriented boundary, then $\xi=TX$ satisfies the conditions of the theorem above  with $k=\mathrm{charrank}_X(TX)$. In this theorem we do not assume that $X$ is an unoriented boundary.

If $X$ is an unoriented boundary and there exists a vector bundle $\xi$ over $X$ with $k$ satisfying the conditions of the above theorem, such that 
\begin{equation}\label{equation}
\mathrm{charrank}(X)=\mathrm{charrank}_X(TX)<k\leq \mathrm{charrank}_X(\xi),\end{equation}
then the bound for $\mathrm{cup}(X)$ using $k$ is sharper than that obtained from Theorem\,\ref{korbastheorem}. We note that over the null cobordant manifold $S^d\times S^m$, $d=2,4,8$, and $m\neq 2,4,8$, there exists a vector bundle $\xi$ and an integer $k$ satisfying the conditions of Theorem\,\ref{zeroththeorem} and equation \ref{equation} (see Examples\,\ref{4.6}, \ref{4.7} below). 

If $X$ is a connected closed smooth manifold with $\mathrm{ucharrank}(X)=\mathrm{dim}(X)$, it turns out that the cup-length $\mathrm{cup}(X)$ of $X$ can be computed as the maximal length 
of a non-zero product of the Stiefel-Whitney classes of a suitable bundle over $X$. We prove the following.

\begin{Thm}\label{cuplength}
Let $X$ be a connected closed smooth $d$-manifold. If $$\mathrm{ucharrank}(X)=\mathrm{dim}(X),$$ then there exists a vector bundle $\xi$ over $X$ such that 
$$\mathrm{cup}(X)=\max\{k\mid \mbox{there exist $ i_1,\ldots , i_k\geq 1$ with $w_{i_1}(\xi)\cdots w_{i_k}(\xi)\neq 0$}\}.$$
\end{Thm}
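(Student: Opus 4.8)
The plan is to unpack what $\mathrm{ucharrank}(X)=\dim(X)=d$ buys us and then combine it with Poincar\'e duality. By definition of upper characteristic rank, there exists a vector bundle $\xi$ over $X$ with $\mathrm{charrank}_X(\xi)=d$, which means that \emph{every} cohomology class in $H^j(X;\mathbb Z_2)$, for all $0\le j\le d$, is a polynomial in the Stiefel-Whitney classes $w_i(\xi)$. In particular the top class and every class appearing in a top-dimensional nonzero product is so expressible. I would fix this $\xi$ once and for all and show it is the bundle asserted by the theorem.

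First I would denote by $\ell$ the right-hand side, the maximal length of a nonzero product $w_{i_1}(\xi)\cdots w_{i_k}(\xi)$ with each $i_j\ge 1$, and by $c=\mathrm{cup}(X)$ the $\mathbb Z_2$-cup-length. The inequality $\ell\le c$ is immediate: any nonzero product of Stiefel-Whitney classes of positive degree is in particular a nonzero product of positive-degree cohomology classes, so it is witnessed in the definition of cup-length. The substance of the theorem is the reverse inequality $c\le \ell$. The plan for this direction is as follows. Take classes $x_1,\dots,x_c\in H^{*}(X;\mathbb Z_2)$ of positive degree with $x_1\cdots x_c\ne 0$. Since this product is a nonzero element of $H^d(X;\mathbb Z_2)$ (a product of $c$ positive-degree classes lands in degree $\ge c$, and by maximality of $c$ together with Poincar\'e duality one sees the product may be taken to be the top class), I would like to rewrite it as a nonzero product of $\ge c$ Stiefel-Whitney classes of $\xi$, thereby exhibiting $c\le\ell$.

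The mechanism is to expand. Because $\mathrm{charrank}_X(\xi)=d$, each $x_j$ is a polynomial $P_j\big(w_1(\xi),w_2(\xi),\dots\big)$ with no constant term (as $x_j$ has positive degree, the polynomial has no degree-zero part). Substituting and expanding the product $x_1\cdots x_c=\prod_{j} P_j(w(\xi))$ by distributivity, one obtains a $\mathbb Z_2$-sum of monomials in the $w_i(\xi)$, each monomial being a product of at least $c$ classes $w_i(\xi)$ with $i\ge 1$ (at least one factor comes from each $P_j$, and every factor has positive degree). Since the whole sum is nonzero in $H^d$, at least one monomial in the expansion is nonzero; that monomial is a nonzero product of at least $c$ positive-degree Stiefel-Whitney classes of $\xi$, whence $\ell\ge c$. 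Combined with $\ell\le c$ this gives the claimed equality, with $\xi$ the chosen bundle.

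The step I expect to require the most care is ensuring the distributed monomials genuinely have length at least $c$, i.e.\ that each factor $P_j$ contributes at least one $w_i(\xi)$ of positive degree to every surviving monomial. This is where the absence of a constant term in each $P_j$ is essential, and I would spell out that a monomial of total degree $d$ arising from the product must draw a positive-degree factor from each of the $c$ polynomials, so its length is at least $c$ (and each such factor has $i\ge1$). A secondary point to verify cleanly is that the top nonzero product realizing the cup-length can indeed be taken in degree exactly $d$ with nonzero value in $H^d(X;\mathbb Z_2)\cong\mathbb Z_2$; here Poincar\'e duality for the closed manifold $X$ guarantees a nondegenerate cup pairing, so a maximal-length nonzero product pairs to give a nonzero top class, which is the fact I would invoke to pin everything down in the top dimension.
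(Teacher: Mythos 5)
Your proposal is correct and follows essentially the same route as the paper: fix $\xi$ with $\mathrm{charrank}_X(\xi)=d$, expand each factor of a maximal nonzero product as a constant-free polynomial in the $w_i(\xi)$, distribute, and extract a nonzero monomial of length at least $\mathrm{cup}(X)$, the reverse inequality being immediate. The appeal to Poincar\'e duality to place the product in top degree is harmless but not needed for this argument.
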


Making the definition of characteristic rank bundle-dependent allows us, under certain conditions, to construct an epimorphism  $\widetilde{KO}(X)\longrightarrow \mathbb Z_2$. It is clear from the definition that $\mathrm{charrank}_X(\xi)=\mathrm{charrank}_X(\eta)$ if $\xi$ and $\eta$ are (stably) isomorphic. Let $\mathrm{Vect}_{\mathbb R}(X)$ denote the semi-ring of isomorphism classes of real vector bundles over $X$. We then have a function 
$$f:\mathrm{Vect}_{\mathbb R}(X)\longrightarrow \mathbb Z_2$$
defined by $f(\xi)=\mathrm{charrank}_X(\xi) ~~~~~(\mbox{mod}~~~~2)$. We observe that under certain restrictions on the values of $\mathrm{charrank}_X(\xi)$ the function $f$ is actually a semi-group homomorphism. More precisely we prove the following.  

\begin{Thm}\label{maintheorem}
Let $X$ be a connected finite $CW$-complex with $r_X=1$. Assume that for any vector bundle $\xi$ over $X$, $\mathrm{charrank}_X(\xi)$ is either $r_X-1=0$ or  an odd integer. 
Assume that $\mathrm{ucharrank}(X)\geq 1$. Then the function 
$$f:\mathrm{Vect}_{\mathbb R}(X)\longrightarrow \mathbb Z_2$$ 
defined by $f(\xi)=\mathrm{charrank}_X(\xi)~~~~~\mbox{\em (mod $2$)}$ is a surjective semi-group homomorphism and hence gives rise to a surjective group homomorphism 
$\widetilde{f}:KO(X)\longrightarrow \mathbb Z_2$. Furthermore, this restricts to an epimorphism $\tilde{f}:\widetilde{KO}(X)\longrightarrow \mathbb Z_2$. 
\end{Thm}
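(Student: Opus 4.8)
The plan is to show that, under these hypotheses, the parity of the characteristic rank is detected entirely by the first Stiefel--Whitney class, and then to appeal to the additivity of $w_1$ together with the universal property of the Grothendieck construction.

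First I would pin down $H^1(X;\mathbb Z_2)$. Since $\mathrm{ucharrank}(X)\geq 1$, there is a bundle $\xi_0$ with $\mathrm{charrank}_X(\xi_0)\geq 1$, so every class of $H^1(X;\mathbb Z_2)$ is a polynomial in the $w_i(\xi_0)$. As the only such polynomials in degree $1$ are $0$ and $w_1(\xi_0)$, and as $H^1(X;\mathbb Z_2)\neq 0$ because $r_X=1$, this forces $H^1(X;\mathbb Z_2)=\{0,w_1(\xi_0)\}\cong\mathbb Z_2$; write $t$ for its generator. Next I would record the key dichotomy valid for an arbitrary bundle $\zeta$: because the only degree-$1$ polynomial in the Stiefel--Whitney classes of $\zeta$ are $0$ and $w_1(\zeta)$, one has $\mathrm{charrank}_X(\zeta)\geq 1$ if and only if $w_1(\zeta)=t$, and $\mathrm{charrank}_X(\zeta)=0$ if and only if $w_1(\zeta)=0$. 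Invoking the hypothesis that $\mathrm{charrank}_X(\zeta)$ is either $0$ or odd now yields $f(\zeta)=1$ exactly when $w_1(\zeta)=t$ and $f(\zeta)=0$ exactly when $w_1(\zeta)=0$. In other words, $f$ factors as the composite $\mathrm{Vect}_{\mathbb R}(X)\xrightarrow{\,w_1\,}H^1(X;\mathbb Z_2)\xrightarrow{\ \cong\ }\mathbb Z_2$.

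With this identification the homomorphism property is immediate: the degree-$1$ part of the Whitney product formula gives $w_1(\xi\oplus\eta)=w_1(\xi)+w_1(\eta)$, hence $f(\xi\oplus\eta)=f(\xi)+f(\eta)$, so $f$ is a semi-group homomorphism. Surjectivity follows since $f(\xi_0)=1$ while the trivial bundle $\varepsilon$ has $w_1(\varepsilon)=0$, so $f(\varepsilon)=0$. Because $f$ is a monoid homomorphism from $(\mathrm{Vect}_{\mathbb R}(X),\oplus)$ into the abelian group $\mathbb Z_2$ (the identity being the zero bundle, on which $f$ vanishes), the universal property of the Grothendieck completion extends it uniquely to a group homomorphism $\widetilde f:KO(X)\to\mathbb Z_2$, which remains surjective. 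Finally, to see that the restriction to $\widetilde{KO}(X)$ is still onto, I would evaluate $\widetilde f$ on the rank-zero virtual bundle $[\xi_0]-[\varepsilon^{\,n}]$, where $n=\mathrm{rank}(\xi_0)$: this lies in $\widetilde{KO}(X)$ and maps to $f(\xi_0)-f(\varepsilon^{\,n})=1$.

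The only real content lies in the second paragraph's observation that the hypotheses force $f$ to coincide with $w_1$ up to the isomorphism $H^1(X;\mathbb Z_2)\cong\mathbb Z_2$; everything after that is formal. I would expect the main point needing care to be the verification that $\mathrm{charrank}_X(\zeta)\geq 1$ is genuinely \emph{equivalent} to $w_1(\zeta)=t$ (rather than merely implied by it), since this is where the one-dimensionality of $H^1(X;\mathbb Z_2)$ and the ``$0$ or odd'' assumption are both used in an essential way; without the parity hypothesis a bundle with $w_1(\zeta)=t$ but even positive characteristic rank would break additivity.
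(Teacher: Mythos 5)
Your proof is correct and follows essentially the same route as the paper's: the three-way case analysis on orientability of $\xi$ and $\eta$ in the paper is precisely your observation that $f$ factors through $w_1$ via the isomorphism $H^1(X;\mathbb Z_2)\cong\mathbb Z_2$, followed by the Whitney sum formula and the universal property of the Grothendieck construction. If anything, you make explicit the fact that $H^1(X;\mathbb Z_2)\cong\mathbb Z_2$ (Lemma\,\ref{secondlemma}\,(1)), which the paper's argument uses tacitly when it asserts that the Whitney sum of two non-orientable bundles is orientable.
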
 

 The function $f$ defined in the theorem above is 
in general not a semi-ring homomorphism (see Remark\,\ref{importantremark}). 
There is a large class of spaces that satisfy the conditions of this theorem. We prove the following.

\begin{Thm}\label{mainproposition} 
\begin{enumerate}
\item Let $X = \mathbb R\mathbb P^n$. 
Then $\mathrm{ucharrank}(X)= n$ and for any vector bundle $\xi$ over $X$, the characteristic rank $\mathrm{charrank}_X(\xi)$ is either $r_X-1=0$ or is $n$.  
\item Let $X=S^1\times \mathbb C\mathbb P^n$. Then $\mathrm{ucharrank}(X)= 2n+1$ and for any vector bundle $\xi$ over $X$, the characteristic rank $\mathrm{charrank}_X(\xi)$ either is $r_X-1=0$, $1$ or $2n+1$.  
\item Let $X$ be the Dold manifold $P(m,n)$. Then $\mathrm{ucharrank}(X)= 2n +m$ and for any vector bundle $\xi$ over $X$, the characteristic rank $\mathrm{charrank}_X(\xi)$ is either $r_X-1=0$, $1$ or $2n+m$.   
\end{enumerate}
\end{Thm}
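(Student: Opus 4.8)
The plan is to treat all three spaces uniformly, exploiting that in each case the mod~$2$ cohomology ring is generated in degrees $1$ and $2$ and that $r_X=1$. First I would record the cohomology rings: $H^*(\mathbb R\mathbb P^n;\mathbb Z_2)=\mathbb Z_2[a]/(a^{n+1})$ with $\deg a=1$; $H^*(S^1\times\mathbb C\mathbb P^n;\mathbb Z_2)=\mathbb Z_2[t,c]/(t^2,c^{n+1})$ with $\deg t=1$, $\deg c=2$; and, by the classical computation of Dold, $H^*(P(m,n);\mathbb Z_2)=\mathbb Z_2[a,b]/(a^{m+1},b^{n+1})$ with $\deg a=1$, $\deg b=2$. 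In each case $H^1\neq 0$, so $r_X=1$, and the ring is a (truncated) polynomial algebra on one degree-$1$ generator and, in the latter two cases, one degree-$2$ generator.

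The central reduction is that $\mathrm{charrank}_X(\xi)$ is completely determined by $w_1(\xi)\in H^1$ and $w_2(\xi)\in H^2$. Indeed, the degree-$1$ part of the subalgebra generated by the $w_i(\xi)$ is spanned by $w_1(\xi)$ alone (any product of two positive-degree classes has degree $\geq 2$), so the degree-$1$ generator lies in this subalgebra if and only if $w_1(\xi)\neq 0$; when $w_1(\xi)=0$ one gets $\mathrm{charrank}_X(\xi)=0=r_X-1$ at once. When $w_1(\xi)\neq 0$ the degree-$1$ generator is recovered, and the only remaining obstruction is the degree-$2$ generator ($c$ resp.\ $b$): the degree-$2$ part of the subalgebra is spanned by $w_2(\xi)$ and $w_1(\xi)^2$, so the degree-$2$ generator is recovered precisely when $w_2(\xi)$ has nonzero component along it modulo $w_1(\xi)^2$. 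Once both generators lie in the subalgebra it is all of $H^*(X;\mathbb Z_2)$, forcing $\mathrm{charrank}_X(\xi)=\dim X$. This yields the claimed trichotomy $\{0,1,\dim X\}$ (and, for $\mathbb R\mathbb P^n$, where there is no degree-$2$ generator, the dichotomy $\{0,n\}$).

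To complete the proof I must exhibit, for each $X$, a bundle attaining $\mathrm{charrank}_X(\xi)=\dim X$, which simultaneously pins down $\mathrm{ucharrank}(X)$. For $\mathbb R\mathbb P^n$ the tautological line bundle $\gamma$ has $w_1(\gamma)=a$, so $\mathrm{charrank}_{\mathbb R\mathbb P^n}(\gamma)=n$. For $S^1\times\mathbb C\mathbb P^n$ I take $\xi=p_1^*\lambda\oplus p_2^*\eta_{\mathbb R}$, where $\lambda$ is the nontrivial line bundle over $S^1$ and $\eta_{\mathbb R}$ is the realification of the tautological complex line bundle over $\mathbb C\mathbb P^n$; then $w(\xi)=(1+t)(1+c)$, giving $w_1(\xi)=t$, $w_2(\xi)=c$ and hence $\mathrm{charrank}_X(\xi)=2n+1$.

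The hard part will be the Dold manifold. Here I would construct the rank-$2$ real bundle $\beta=S^m\times_{\mathbb Z_2}\eta_{\mathbb R}$, where $\mathbb Z_2$ acts antipodally on $S^m$ and on $\eta_{\mathbb R}$ through the conjugation-induced antilinear involution covering $z\mapsto\bar z$ on $\mathbb C\mathbb P^n$; this is exactly the bundle descending the pulled-back realified tautological bundle along the double cover $S^m\times\mathbb C\mathbb P^n\to P(m,n)$. Two computations are required. Restricting $\beta$ to a fibre $\mathbb C\mathbb P^n$ recovers $\eta_{\mathbb R}$, so $w_2(\beta)$ restricts to the generator of $H^2(\mathbb C\mathbb P^n;\mathbb Z_2)$ and therefore has nonzero $b$-component, i.e.\ $w_2(\beta)\in\{b,\,a^2+b\}$. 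Identifying the real determinant line bundle $\det_{\mathbb R}\beta$ with the line bundle $\gamma$ of the double cover — since conjugation on $\mathbb C\cong\mathbb R^2$ is orientation-reversing — then gives $w_1(\beta)=a$. The subalgebra generated by $w_1(\beta)=a$ and $w_2(\beta)\in\{b,\,a^2+b\}$ contains both $a$ and $b$, hence equals $H^*(P(m,n);\mathbb Z_2)$, so $\mathrm{charrank}_X(\beta)=2n+m=\mathrm{ucharrank}(P(m,n))$. I expect the verification $w_1(\beta)=a$, via the orientation-reversing determinant computation, to be the delicate point, together with bookkeeping for the degenerate case $m=1$, where $a^2=0$ collapses $H^2$ to a single class and $P(1,n)$ has the same cohomology ring as $S^1\times\mathbb C\mathbb P^n$.
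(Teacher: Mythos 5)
Your proposal is correct, and for parts (1) and (2) it coincides with the paper's argument: reduce everything to $w_1(\xi)$ and $w_2(\xi)$ (higher classes cannot contribute in degrees $1$ and $2$, and once the ring generators lie in the subalgebra generated by the $w_i(\xi)$ the whole ring does), then exhibit the obvious line bundles and pullbacks to realize the maximal value. The genuine divergence is in part (3). The paper does not construct anything over the Dold manifold: it quotes Stong's result (Proposition~4.2 in the paper) that $P(m,n)$ carries a line bundle with total class $1+c$ and a $2$-plane bundle with total class $1+c+d$, and then runs the same trichotomy. You instead build the required $2$-plane bundle by hand as $\beta=S^m\times_{\mathbb Z_2}\eta_{\mathbb R}$, using the antilinear lift of conjugation, and verify $w_2(\beta)\in\{d,\,c^2+d\}$ by restriction to a fibre $\mathbb C\mathbb P^n$ and $w_1(\beta)=c$ by identifying $\det_{\mathbb R}\beta$ with the line bundle of the double cover (conjugation being orientation-reversing on complex lines). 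This computation is correct, and the ambiguity in $w_2(\beta)$ is harmless since either value generates $d$ together with $c$. What your route buys is self-containedness — no appeal to Stong — at the cost of the determinant-line verification; what the paper's route buys is brevity, plus the exact classes $1+c$ and $1+c+d$, which it uses to note (though the theorem's statement does not require it) that the intermediate value $\mathrm{charrank}_X(\xi)=1$ is actually attained. Your remark that the case $m=1$ only collapses $H^2$ and changes nothing is also the right bookkeeping.
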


Recall that the Dold manifold $P(m,n)$ is the quotient of $S^m\times \mathbb C\mathbb P^n$ by the fixed point free involution $(x,z)\mapsto (-x,\bar{z})$.

In this note we concentrate on the computational part of characteristic rank of vector bundles. We compute the characteristic rank of vector bundles over products of spheres $S^d\times S^m$, the real and complex projective spaces, the spaces $S^1\times \mathbb C\mathbb P^n$, the Dold manifold $P(m,n)$, the 
Moore space $M(\mathbb Z_2,n)$ and the stunted projective space $\mathbb R\mathbb P^n/\mathbb R\mathbb P^m$. We also prove some general facts about characteristic rank of vector bundles.

The paper is organized as follows. In Section 2 we prove some general facts about $\mathrm{charrank}(\xi)$. In Section 3 we prove Theorems\,\ref{zeroththeorem}, \ref{cuplength} and \ref{maintheorem}. Finally, in Section 4, we compute $\mathrm{charrank}_X(\xi)$ where $X$ is one of the following spaces: the product of spheres $S^d\times S^m$, the real and complex projective spaces, the product $S^1\times \mathbb C\mathbb P^n$, the Dold manifold $P(m,n)$, the Moore space $M(\mathbb Z_2,n)$  and the stunted projective space.  

{\bf Convention.} By a space we shall mean a connected finite $CW$-complex. All vector bundles are real unless otherwise stated.

%%%%%%%%%%%%%%%%%%%%%%%%%%%%%%%%%%%%%%%%%%%%%%%%%%%%%%%%%%%%%%%%%%%%%%%%%%%%%%%%%%%%%%%%%%%%%%%%%%%%%%%%%%%%

%%%%%%%%%%%%%%%%%%%%%%%%%%%%%%%%%%%%%%%%%%%%%%%%%%%%%%%%%%%%%%%%%%%%%%%%%%%%%%%%%%%%
\section{Generalities}
%%%%%%%%%%%%%%%%%%%%%%%%%%%%%%%%%%%%%%%%%%%%%%%%%%%%%%%%%%%%%%%%%%%%%%%%%%%%%%%%%%%%%

%%%%%%%%%%%%%%%%%%%%%%%%%%%%%%%%%%%%%%%%%%%%%%%%%%%%%%%%%%%%%%%%%%%%%%%%%%%%%%%%%%%%%%%%%%%%%%%%%%%%%%%%%%%%%%%%%%

In this section we make some general observations about $\mathrm{charrank}(\xi)$. 
Recall that, for a space $X$, $r_X$ denotes the smallest positive integer for which the reduced cohomology group $\widetilde{H}^{r_X}(X;\mathbb Z_2)\neq 0$, and if such an $r_X$ does not exist, then we set $r_X=\mathrm{dim}(X)+1$. Then for any vector bundle $\xi$ over $X$ we have  
$$r_X-1\leq \mathrm{charrank}_X(\xi)\leq \mathrm{ucharrank}(X).$$
We begin with some easy observations. 

\begin{Lem} \label{firstlemma} Let $\xi$ and $\eta$ be any two vector bundles over a space $X$.     
\begin{enumerate}
\item If $w_{r_X}(\xi)=0$, then $\mathrm{charrank}_X(\xi)=r_X-1$;
\item If $w(\xi)=1$, then $\mathrm{charrank}_X(\xi)=r_X-1$. 
\item If $w(\eta)=1$, then $\mathrm{charrank}_{X}(\xi\oplus\eta)=\mathrm{charrank}_X(\xi)$. Hence if $\widetilde{KO}(X)=0$, then $\mathrm{charrank}_X(\xi)=r_X-1$ for any vector bundle over $X$;
\item If $\xi$ and $\eta$ are stably isomorphic, then $\mathrm{charrank}_X(\xi)=\mathrm{charrank}_X(\eta)$; 
\item There exists a vector bundle $\theta$ over $X$ such that $\mathrm{charrank}_{X}(\xi\oplus \theta)=r_X-1$.
\end{enumerate}
\end{Lem}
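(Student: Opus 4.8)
The plan is to prove each of the five parts in turn, all of them flowing directly from the definition of $\mathrm{charrank}_X(\xi)$ as the largest $k$ for which every class in $\widetilde{H}^j(X;\mathbb Z_2)$ with $j\le k$ is a polynomial in the $w_i(\xi)$. Throughout I will use the baseline fact, already recorded in the text, that $r_X-1\le\mathrm{charrank}_X(\xi)$ always holds, so for each part it suffices to argue the reverse inequality or to exhibit the relevant polynomial expressions.

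For part (1) I would argue that $\mathrm{charrank}_X(\xi)\ge r_X$ would force every class of degree $r_X$ to be a polynomial in the $w_i(\xi)$; but in degrees $1,\dots,r_X-1$ the reduced cohomology vanishes, so the only polynomial contributions in degree $r_X$ come from $w_{r_X}(\xi)$ itself (products and higher powers of lower classes land in degrees $<r_X$ where cohomology is zero). Hence if $w_{r_X}(\xi)=0$ there is literally no nonzero polynomial of degree $r_X$, and since $\widetilde H^{r_X}(X;\mathbb Z_2)\ne0$ by definition of $r_X$, not every degree-$r_X$ class is a polynomial in the $w_i(\xi)$; therefore $\mathrm{charrank}_X(\xi)=r_X-1$. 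Part (2) is then immediate: $w(\xi)=1$ gives in particular $w_{r_X}(\xi)=0$, so (1) applies. Part (4) is equally direct from the definition together with the stability of Stiefel--Whitney classes: if $\xi$ and $\eta$ are stably isomorphic then $w(\xi)=w(\eta)$, so the same polynomials in the same classes are available, forcing equal characteristic ranks; this is essentially the observation already made in the introduction.

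For part (3), if $w(\eta)=1$ then the Whitney product formula gives $w(\xi\oplus\eta)=w(\xi)w(\eta)=w(\xi)$, so $\xi\oplus\eta$ and $\xi$ have identical total Stiefel--Whitney classes, and the polynomial expressions available are identical; hence $\mathrm{charrank}_X(\xi\oplus\eta)=\mathrm{charrank}_X(\xi)$. For the consequence, if $\widetilde{KO}(X)=0$ then every vector bundle $\xi$ is stably trivial, i.e.\ stably isomorphic to a trivial bundle, whence by part (4) and part (2) (a trivial bundle has $w=1$) we get $\mathrm{charrank}_X(\xi)=r_X-1$. The main content of the lemma is part (5): I would choose $\theta$ to be a bundle whose total Stiefel--Whitney class cancels the higher classes of $\xi$. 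The natural candidate is the stable inverse $\bar\xi$ of $\xi$, which exists since $X$ is a finite complex; then $\xi\oplus\bar\xi$ is stably trivial, so $w(\xi\oplus\bar\xi)=1$, and part (2) yields $\mathrm{charrank}_X(\xi\oplus\theta)=r_X-1$ with $\theta=\bar\xi$ (or any honest vector bundle representative of the stable inverse, padded by a trivial summand so that it is an actual bundle over the finite complex $X$).

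The one genuine obstacle to watch is part (5): I must ensure $\theta$ is an honest vector bundle, not merely a virtual/stable class. The resolution is standard finiteness: over a finite $CW$-complex every bundle has a stable inverse realized by an actual bundle (add a complementary bundle inside a sufficiently large trivial bundle), and by part (4) replacing $\theta$ with a stably isomorphic bundle does not change the characteristic rank, so I am free to work at the stable level and then pass back. Beyond this, the remaining steps are routine applications of the Whitney sum formula and the definition, and the degree bookkeeping in part (1)---confirming that no product of lower-degree classes can reach degree $r_X$ nontrivially because the intermediate reduced cohomology groups vanish---is the only place requiring a careful but short argument.
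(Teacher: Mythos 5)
Your proof is correct and follows essentially the same route as the paper: the baseline inequality plus the vanishing of cohomology below degree $r_X$ for (1), the Whitney product formula for (2)--(4), and compactness of $X$ to produce a complement $\theta$ with $\xi\oplus\theta$ trivial for (5). The only difference is that you spell out the degree bookkeeping in (1), which the paper dismisses as immediate from the definition.
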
 
\begin{proof}(1) follows from the definition. Clearly, (2) follows from (1). 
To prove (3) we note that since $w(\xi\oplus\eta)=w(\xi)$, we have $\mathrm{charrank}_{X}(\xi\oplus\eta)=\mathrm{charrank}_X(\xi)$. As $\widetilde{KO}(X)=0$, we have $\xi\oplus\varepsilon\cong\varepsilon'$. Hence 
$$\mathrm{charrank}_X(\xi)=\mathrm{charrank}_X(\xi\oplus\varepsilon)=\mathrm{charrank}_X(\varepsilon')=r_X-1.$$
This completes the proof of (3). 
Next, if $\xi$ and $\eta$ are stably isomorphic, we have   
 $\xi\oplus \varepsilon\cong \eta\oplus\varepsilon'$ where $\varepsilon$ and $\varepsilon'$ are trivial vector bundles. Hence (4) follows from (3). 
Finally, as $X$ is compact, given $\xi$ we can find a vector bundle $\theta$ such that $\xi\oplus\theta\cong \varepsilon$. Hence (5) follows from (4) and (2).
\end{proof}
\begin{Lem} \label{secondlemma}
Let $X$ be a space and $1 \leq r_X \leq \dim(X)$
\begin{enumerate}
\item If $\mathrm{ucharrank}(X)\geq r_X$, then $\mathrm{dim}_{\mathbb Z_2}H^{r_X}(X;\mathbb Z_2)= 1$. 
\item If $r_X$ is not a power of $2$, then $\mathrm{ucharrank}(X)=r_X-1$. 
\end{enumerate}
\end{Lem}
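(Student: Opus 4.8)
The plan is to treat the two parts separately, proving (1) by a degree count and (2) by forcing $w_{r_X}(\xi)=0$ and then quoting Lemma \ref{firstlemma}(1). Throughout I use the defining property of $r_X$: the reduced groups $H^j(X;\mathbb Z_2)$ vanish for $0<j<r_X$, so for every bundle $\xi$ one has $w_j(\xi)=0$ whenever $0<j<r_X$. For part (1), assume $\mathrm{ucharrank}(X)\ge r_X$ and choose $\xi$ with $\mathrm{charrank}_X(\xi)\ge r_X$, so every class of $H^{r_X}(X;\mathbb Z_2)$ is a polynomial in the $w_i(\xi)$. I would note that any monomial $w_{i_1}(\xi)\cdots w_{i_s}(\xi)$ of total degree $r_X$ with $s\ge 2$ must contain a factor $w_{i_t}(\xi)$ of degree $0<i_t<r_X$, hence vanishes; thus the only monomial that can be nonzero in degree $r_X$ is $w_{r_X}(\xi)$ itself. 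Therefore $H^{r_X}(X;\mathbb Z_2)$ is spanned by the single class $w_{r_X}(\xi)$, so it is at most one-dimensional, and since $r_X\le\dim(X)$ it is nonzero by the choice of $r_X$; hence $\dim_{\mathbb Z_2}H^{r_X}(X;\mathbb Z_2)=1$.

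For part (2), recall that $\mathrm{charrank}_X(\eta)\ge r_X-1$ for every $\eta$, so it suffices to show $\mathrm{charrank}_X(\xi)=r_X-1$ for all $\xi$, and by Lemma \ref{firstlemma}(1) this follows once I prove $w_{r_X}(\xi)=0$. Here I would exploit that $w_{r_X-i}(\xi)=0$ for $1\le i\le r_X-1$ and apply $Sq^i$ to this class, expanding by Wu's formula $Sq^i(w_j)=\sum_{t=0}^{i}\binom{j+t-i-1}{t}w_{i-t}w_{j+t}$ with $j=r_X-i$. Every term on the right with $i-t\ge 1$ carries a factor $w_{i-t}(\xi)$ of degree between $1$ and $r_X-1$ and so vanishes; only the $t=i$ term survives, giving the identity $\binom{r_X-i-1}{i}\,w_{r_X}(\xi)=0$ in $H^{r_X}(X;\mathbb Z_2)$ for each $1\le i\le r_X-1$. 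Consequently $w_{r_X}(\xi)=0$ as soon as one of these binomial coefficients is odd.

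The crux, and the step I expect to need the most care, is the purely combinatorial claim that when $r_X$ is not a power of $2$ at least one $\binom{r_X-i-1}{i}$, $1\le i\le r_X-1$, is odd; this is precisely where the hypothesis is used. I would prove it by studying $P_n(t)=\sum_{i\ge 0}\binom{n-1-i}{i}t^i\in\mathbb F_2[t]$, whose positive-degree coefficients are exactly the binomials in question, so the claim becomes $P_n\ne 1\iff n\ne 2^k$. From Pascal's rule one gets $P_n=P_{n-1}+tP_{n-2}$ with $P_1=P_2=1$, i.e. the generating function $\sum_{n\ge 1}P_n z^n=z/(1+z+tz^2)$ over $\mathbb F_2$; extracting even powers yields the Frobenius identity $P_{2m}(t)=P_m(t)^2$, whence $P_{2^k}=1$ by induction, while for odd $n=2m+1\ge 3$ the summand $tP_{2m-1}$ contributes a nonzero coefficient in degree $1$ (since $P_{2m-1}$ has constant term $1$) and $P_m^2$ has only even-degree terms, so $P_{2m+1}\ne 1$. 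Combining the two cases gives $P_n=1$ exactly for powers of $2$, which completes part (2). Alternatively, this combinatorial fact may simply be quoted as the classical statement that $Sq^{r_X}$ is decomposable in the mod $2$ Steenrod algebra precisely when $r_X$ is not a power of $2$.
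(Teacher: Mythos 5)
Your proof is correct and structurally the same as the paper's. Part (1) is exactly the paper's argument spelled out: since $H^j(X;\mathbb Z_2)=0$ for $0<j<r_X$, the only monomial in the $w_i(\xi)$ that can be nonzero in degree $r_X$ is $w_{r_X}(\xi)$ itself, so $H^{r_X}(X;\mathbb Z_2)$ is spanned by that single class. Part (2) likewise reduces, as in the paper, to showing $w_{r_X}(\xi)=0$ and invoking Lemma \ref{firstlemma}(1); the only divergence is that the paper simply quotes the classical fact (\cite{ms}, p.~94) that the smallest $k$ with $w_k(\xi)\neq 0$ is always a power of $2$, whereas you reprove it from scratch. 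Your derivation is sound: the identities $\binom{r_X-i-1}{i}\,w_{r_X}(\xi)=0$ obtained from the Wu formula, together with the combinatorial claim that some such coefficient is odd when $r_X$ is not a power of $2$, are precisely the content of the cited exercise, and your generating-function treatment of $P_n$ (via $P_n=P_{n-1}+tP_{n-2}$ and $P_{2m}=P_m^2$ over $\mathbb F_2$) checks out — though one can shortcut it by taking $i=2^k$ where $r_X=2^k(2s+1)$ with $s\geq 1$ and applying Lucas's theorem. The self-contained route costs a page but buys independence from the reference; the paper's citation is the economical choice. One cosmetic remark: the Wu formula for $Sq^i(w_j)$ is usually stated for $i\leq j$, but since $\binom{r_X-i-1}{i}=0$ whenever $i>(r_X-1)/2$, only the range $i\leq r_X-i$ carries any information, so this causes no gap in your argument.
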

\begin{proof} If $\xi$ is a vector bundle over $X$ with $\mathrm{charrank}_X(\xi)\geq r_X$, then by Lemma\,\ref{firstlemma} (1), $w_{r_X}(\xi)\neq 0$. This 
forces the equality $\mathrm{dim}_{\mathbb Z_2}H^{r_X}(X;\mathbb Z_2)= 1$ and proves (1). It is known that for any vector bundle $\xi$, the smallest integer 
$k$ such that $w_k(\xi)\neq 0$ is always a power of $2$ (see, for example, \cite{ms}, page 94). Lemma\,\ref{firstlemma} (1) now completes the proof of (2). \end{proof}

Let $Y$ be a space and and let $X=\Sigma Y$ be the suspension of $Y$. Then any cup-product of elements of positive degree in $H^*(X;\mathbb Z_2)$ is zero. 
The following lemma is an easy consequence of this fact and we omit the proof. 

\begin{Lem}\label{thirdlemma}
Let $Y$ be a space and $X=\Sigma Y$. Let $k_X$ be an integer defined by 
$$k_X=\mathrm{max}\{k\mid \mathrm{dim}_{\mathbb Z_2}H^j(X;\mathbb Z_2)\leq 1, 0\leq j\leq k, k\leq \mathrm{dim}(X)\}.$$
Let $\xi$ be any vector bundle over $X$. Then, $\mathrm{charrank}_X(\xi)\leq k_X$. In particular, $\mathrm{ucharrank}(X)\leq k_X$. \qed
\end{Lem}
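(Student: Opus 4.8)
The plan is to establish the bound $\mathrm{charrank}_X(\xi)\leq k_X$ by exhibiting, just beyond degree $k_X$, a cohomology class that cannot be a polynomial in the Stiefel-Whitney classes of $\xi$. First I would unpack the definition of $k_X$: it is the largest $k$ (not exceeding $\dim X$) for which every cohomology group $H^j(X;\mathbb Z_2)$ in the range $0\leq j\leq k$ has dimension at most one. The key structural input is that $X=\Sigma Y$ is a suspension, so by the stated fact every cup product of positive-degree classes vanishes; equivalently, the reduced cohomology ring $\widetilde H^*(X;\mathbb Z_2)$ has trivial multiplication.

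The heart of the argument is to see what polynomials in Stiefel-Whitney classes can produce. Since all cup products of positive-degree classes are zero, any polynomial $P(w_1(\xi),w_2(\xi),\ldots)$ with no constant term reduces to its linear part: every genuine product $w_{i_1}(\xi)\cdots w_{i_s}(\xi)$ with $s\geq 2$ and all $i_t\geq 1$ vanishes. Hence the only polynomials in the $w_i(\xi)$ that can appear in positive degree $j$ are $\mathbb Z_2$-linear combinations that collapse to scalar multiples of the single class $w_j(\xi)$. Thus the subspace of $H^j(X;\mathbb Z_2)$ consisting of polynomials in the $w_i(\xi)$ is spanned (for $j\geq 1$) by $w_j(\xi)$ alone, so it is at most one-dimensional.

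With this in hand I would argue by contradiction. Suppose $\mathrm{charrank}_X(\xi)\geq k_X+1$. By definition of $k_X$ as a maximum, and since $k_X<\dim X$ in the relevant case, the degree $j=k_X+1$ is the first degree where $\dim_{\mathbb Z_2}H^{k_X+1}(X;\mathbb Z_2)\geq 2$. But every class in $H^{k_X+1}(X;\mathbb Z_2)$ would then have to be a polynomial in the $w_i(\xi)$, and by the previous paragraph the space of such polynomials in degree $k_X+1$ is at most one-dimensional, spanned by $w_{k_X+1}(\xi)$. A two-dimensional space cannot be contained in a one-dimensional one, giving the contradiction. Therefore $\mathrm{charrank}_X(\xi)\leq k_X$, and taking the maximum over all $\xi$ yields $\mathrm{ucharrank}(X)\leq k_X$.

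The only delicate points are boundary bookkeeping rather than real obstacles: one must handle the degenerate case where $\dim_{\mathbb Z_2}H^j(X;\mathbb Z_2)\leq 1$ for all $j\leq \dim X$ (so $k_X=\dim X$ and the bound is vacuous), and confirm that the definition of $k_X$ as a maximum correctly locates the first degree exceeding dimension one. I expect the main conceptual step—the reduction of arbitrary Stiefel-Whitney polynomials to their linear parts using vanishing of cup products in a suspension—to be entirely routine, which is presumably why the authors omit the proof; the writeup is essentially a one-paragraph verification.
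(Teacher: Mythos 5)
Your proof is correct and follows exactly the route the authors intend: the paper omits the proof precisely because, as you observe, the vanishing of all positive-degree cup products in a suspension forces every Stiefel--Whitney polynomial in degree $j\geq 1$ to collapse to a scalar multiple of $w_j(\xi)$, which cannot exhaust a cohomology group of dimension at least $2$ in degree $k_X+1$. Your handling of the degenerate case $k_X=\dim X$ and the identification of $k_X+1$ as the first degree of dimension $\geq 2$ are both accurate.
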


\begin{Lem}\label{inequalitylemma}
Let $f:X\longrightarrow Y$ be a map between spaces. If $f^*:H^*(Y;\mathbb Z_2)\longrightarrow H^*(X;\mathbb Z_2)$ is surjective, then 
$$\mathrm{charrank}_{X}(f^*\xi)\geq \min\{\mathrm{charrank}_{Y}(\xi), \mathrm{dim}(X)\}$$
for any vector bundle $\xi$ over $Y$. 
\end{Lem}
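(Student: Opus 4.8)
The plan is to verify the defining property of characteristic rank directly, leveraging the naturality of Stiefel-Whitney classes together with the hypothesis that $f^*$ is surjective. Write $m=\min\{\mathrm{charrank}_Y(\xi),\dim(X)\}$; by the definition of the minimum we have $m\leq \dim(X)$, so $m$ is an admissible value for a characteristic rank over $X$, and it makes sense to aim for $\mathrm{charrank}_X(f^*\xi)\geq m$. Unwinding Definition\,\ref{def}, this amounts to proving that for every $j$ with $0\leq j\leq m$, each class $x\in H^j(X;\mathbb Z_2)$ is a polynomial in the Stiefel-Whitney classes $w_i(f^*\xi)$.

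First I would fix such a $j$ and a class $x\in H^j(X;\mathbb Z_2)$. Since $f^*$ is a degree-preserving ring homomorphism which is surjective, its degree-$j$ component $f^*\colon H^j(Y;\mathbb Z_2)\to H^j(X;\mathbb Z_2)$ is onto, so I can choose $y\in H^j(Y;\mathbb Z_2)$ with $f^*(y)=x$. Because $j\leq m\leq \mathrm{charrank}_Y(\xi)$, the definition of characteristic rank over $Y$ lets me express $y$ as a polynomial $y=P\big(w_1(\xi),w_2(\xi),\dots\big)$ in the Stiefel-Whitney classes of $\xi$.

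Next I would apply $f^*$ to this expression. As $f^*$ is a ring homomorphism it commutes with the formation of polynomials, and by naturality of Stiefel-Whitney classes $f^*w_i(\xi)=w_i(f^*\xi)$; hence
$$x=f^*(y)=f^*\Big(P\big(w_1(\xi),w_2(\xi),\dots\big)\Big)=P\big(w_1(f^*\xi),w_2(f^*\xi),\dots\big),$$
which exhibits $x$ as a polynomial in the Stiefel-Whitney classes of $f^*\xi$. Since $j$ and $x$ were arbitrary in the allowed range (the degree-zero case being the trivial constant polynomial), this yields $\mathrm{charrank}_X(f^*\xi)\geq m$, as required.

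There is really no serious obstacle here: the argument is a formal consequence of the surjectivity of $f^*$, its ring-homomorphism property, and the naturality of the $w_i$. The only points requiring a little care are keeping the truncation at $\dim(X)$ honest (this is precisely why the minimum with $\dim(X)$ appears in the statement, and why the inequality $m\leq \dim(X)$ must be noted so that $m$ is a legitimate characteristic-rank value), and confirming that applying a ring homomorphism to a polynomial in the $w_i(\xi)$ indeed produces the corresponding polynomial in the $w_i(f^*\xi)$. Both become immediate once the identity $w_i(f^*\xi)=f^*w_i(\xi)$ is invoked.
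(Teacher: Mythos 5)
Your argument is correct and is essentially the paper's own proof: both use the naturality identity $w_i(f^*\xi)=f^*(w_i(\xi))$ together with the surjectivity of $f^*$ to lift each class of degree at most $\min\{\mathrm{charrank}_Y(\xi),\dim(X)\}$ and express it as a polynomial in the $w_i(f^*\xi)$. The only difference is cosmetic: the paper splits into the cases $\mathrm{charrank}_Y(\xi)\geq\dim(X)$ and $\mathrm{charrank}_Y(\xi)\leq\dim(X)$, while you fold both into the single quantity $m$.
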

\begin{proof} As $w_i(f^*\xi)=f^*(w_i(\xi))$, the surjectivity of $f^*$ implies that every cohomology class in $H^*(X;\mathbb Z_2)$ 
of degree at most $\mathrm{charrank}_{Y}(\xi)$ is a polynomial in the Stiefel-Whitney classes of $f^*\xi$. 
If $\mathrm{charrank}_{Y}(\xi)\geq \mathrm{dim}(X)$, then  $$\mathrm{charrank}_{X}(f^*\xi)=\mathrm{dim}(X).$$ If $\mathrm{charrank}_{Y}(\xi)\leq \mathrm{dim}(X)$, then $\mathrm{charrank}_{Y}(\xi)\leq \mathrm{charrank}_{X}(f^*\xi)\leq \mathrm{dim}(X)$. \end{proof}

Before mentioning further general properties of the characteristic rank we record the characteristic rank of vector bundles over the sphere. The description of 
the characteristic rank of vector bundles over the spheres is an easy consequence of the following theorem due to Atiyah-Hirzebruch (\cite{atiyah}, Theorem\,1), (see also \cite{milnor}). 

\begin{Thm}\label{atiyah}{\em (\cite{atiyah}, Theorem\,1)} There exists a real vector bundle $\xi$ over the sphere $S^d$ with $w_d(\xi)\neq 0$ only for 
$d=1,2,4$, or $8$.\qed
\end{Thm}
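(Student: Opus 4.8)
The plan is to translate the condition $w_d(\xi)\neq 0$ into a statement about a single Steenrod operation on a two-cell complex, and then to invoke the solution of the Hopf invariant one problem. First I would record that $H^*(S^d;\mathbb Z_2)$ is concentrated in degrees $0$ and $d$, so the only Stiefel--Whitney class of a bundle $\xi$ over $S^d$ that can be nonzero in positive degree is the top class $w_d(\xi)\in H^d(S^d;\mathbb Z_2)\cong\mathbb Z_2$. Since Stiefel--Whitney classes are stable and $\dim S^d=d$, every stable class over $S^d$ is realised by a bundle of rank $d$, so I may assume $\xi$ has rank exactly $d$. For the existence half of the statement (the values $d=1,2,4,8$), I would exhibit the tautological line bundles over $\mathbb R\mathbb P^1$, $\mathbb C\mathbb P^1$, $\mathbb H\mathbb P^1$ and $\mathbb O\mathbb P^1$, whose mod $2$ Euler class (equivalently $w_d$) generates $H^d(S^d;\mathbb Z_2)$.

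For the nonexistence half I would pass to the Thom space $T\xi$. By the Thom isomorphism $\widetilde H^{*}(T\xi;\mathbb Z_2)\cong H^{*-d}(S^d;\mathbb Z_2)$, so $T\xi$ has the mod $2$ cohomology of a two-cell complex $S^d\cup e^{2d}$, with $\widetilde H^*(T\xi;\mathbb Z_2)$ generated by the Thom class $U$ in degree $d$ and by $U^2$ in degree $2d$. The Wu formula for the Thom class gives
$$Sq^{d}U=w_d(\xi)\smile U,$$
so $w_d(\xi)\neq 0$ if and only if $Sq^{d}\colon H^{d}(T\xi;\mathbb Z_2)\to H^{2d}(T\xi;\mathbb Z_2)$ is an isomorphism; that is, if and only if the attaching map $S^{2d-1}\to S^{d}$ of $T\xi$ has Hopf invariant one.

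The nonexistence then splits into two regimes. If $d$ is \emph{not} a power of $2$, then $Sq^{d}$ is decomposable, $Sq^{d}=\sum_{0<b<d}Sq^{a}Sq^{b}$; since $H^{j}(T\xi;\mathbb Z_2)=0$ for $d<j<2d$, each intermediate term $Sq^{b}U$ vanishes, whence $Sq^{d}U=0$ and $w_d(\xi)=0$. This regime needs no deep input beyond the Adem relations (\cite{ms}). The remaining cases are $d=2^{k}$, where $Sq^{d}$ is indecomposable and the above argument fails; here the assertion that $Sq^{2^{k}}$ acts trivially on a two-cell complex $S^{d}\cup e^{2d}$ for $k\geq 4$ is exactly Adams' solution of the Hopf invariant one problem.

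The main obstacle is precisely this last regime, the powers of $2$ with $2^{k}\geq 16$: ruling these out lies substantially deeper than the rest and requires either secondary cohomology operations (Adams) or, in the spirit of the cited reference, Bott periodicity together with Adams operations in $K$-theory. The $K$-theoretic route streamlines the bookkeeping: Bott periodicity computes $\widetilde{KO}(S^d)$ and shows it vanishes unless $d\equiv 0,1,2,4\pmod 8$, instantly forcing $w_d=0$ for all other $d$, after which only the powers of $2$ survive and are disposed of by the Adams-operation argument. Either way, once the power-of-two cases are settled the two halves combine to give existence exactly for $d\in\{1,2,4,8\}$.
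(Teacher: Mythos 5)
Your argument is essentially correct, but note that the paper does not prove this statement at all: it is imported verbatim as Theorem~1 of Atiyah--Hirzebruch, whose own proof runs through Bott periodicity and $K$-theoretic integrality, not through the Steenrod algebra. Your route --- pass to the Thom space $T\xi$ of a rank-$d$ bundle, observe it is a two-cell complex $S^d\cup e^{2d}$, use $Sq^dU=w_d(\xi)\smile U$ to identify $w_d(\xi)\neq 0$ with Hopf invariant one, kill the non-powers of $2$ by the Adem decomposability of $Sq^d$, and defer $d=2^k\geq 16$ to Adams --- is the standard alternative reduction and is sound; it buys a clean conceptual equivalence with a famous theorem at the cost of invoking machinery (secondary operations, or Adams operations \`a la Adams--Atiyah) that is heavier than what the rest of this paper uses, whereas the citation keeps the deep input encapsulated. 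Two small inaccuracies worth fixing: $\widetilde H^{2d}(T\xi;\mathbb Z_2)$ is generated by $U\smile\pi^*(s)$ ($s$ the generator of $H^d(S^d;\mathbb Z_2)$) via the Thom isomorphism, not by $U^2$ --- saying it is generated by $U^2$ presupposes the nonvanishing you are trying to characterize; and in your $K$-theoretic aside, the vanishing of $\widetilde{KO}(S^d)$ for $d\equiv 3,5,6,7\pmod 8$ does not leave ``only the powers of $2$'' surviving (e.g.\ $d=9,10,12$ survive that test), so the Adem step is still needed there --- as it stands your main line already handles those cases, so this is a misstatement in the commentary rather than a gap in the proof.
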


For the Hopf bundle $\nu_d$ over $S^d$ ($d=1,2,4,8$), the Stiefel-Whitney class $w_d(\nu_d)$ is not zero. Thus, 
$$\mathrm{ucharrank}(S^d)=\left\{\begin{array}{cl}
d & \mbox{if $d=1,2,4$, or $8$}\\
d-1 & \mbox{otherwise.}\end{array}\right.$$
Note that $\mathrm{charrank}(S^d)=d-1$. We shall use the above description of characteristic rank of vector bundles over the spheres in the sequel without explicit reference. 

Suppose that $\pi:S^d\longrightarrow X$ is a $k$-sheeted covering with $k>1$ odd. Since $X\cong S^d/G$, where $G$ is a finite group with $|G|=k$, we have that $d$ is odd. By Proposition\,3G.1 of \cite{hatcher}, the homomorphism $\pi^*:H^i(X;\mathbb Z_2)\longrightarrow H^i(S^d;\mathbb Z_2)$ is a monomorphism with image the $G$-invariant elements for all $i\geq 0$. In particular, $H^i(X;\mathbb Z_2)=0$, $0<i<d$ and $\pi^*:H^d(X;\mathbb Z_2)\longrightarrow H^d(S^d;\mathbb Z_2)\cong\mathbb Z_2$ is an isomorphism. 
 Thus we have the following corollary to Theorem \ref{atiyah}.

\begin{Cor}\label{lens1}
Assume that $\pi:S^d\longrightarrow X$ is a $k$-sheeted covering with an odd $k>1$ and $d\neq 1$. Then $w(\xi)=1$ for any vector bundle $\xi$ over $X$ and we have  $\mathrm{ucharrank}(X)=d-1$. 

\end{Cor}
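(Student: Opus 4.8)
The plan is to exploit the fact, already recorded just above the statement, that the mod~$2$ cohomology of $X$ is concentrated in degrees $0$ and $d$, and to transfer the computation of the only possibly nonzero Stiefel--Whitney class up to the sphere via $\pi^*$.

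First I would observe that since $H^i(X;\mathbb Z_2)=0$ for $0<i<d$, every Stiefel--Whitney class $w_i(\xi)$ with $0<i<d$ automatically vanishes. Thus the total class is $w(\xi)=1+w_d(\xi)$, and the whole problem reduces to checking that $w_d(\xi)=0$ in $H^d(X;\mathbb Z_2)\cong\mathbb Z_2$.

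The key step is to pull back to $S^d$. By naturality of Stiefel--Whitney classes, $w_d(\pi^*\xi)=\pi^*(w_d(\xi))$, and since $\pi^*\colon H^d(X;\mathbb Z_2)\to H^d(S^d;\mathbb Z_2)$ is an isomorphism, it suffices to show $w_d(\pi^*\xi)=0$. Now $\pi^*\xi$ is a vector bundle over $S^d$, and because $k>1$ is odd the dimension $d$ is odd; together with the hypothesis $d\neq 1$ this gives $d\notin\{1,2,4,8\}$. Theorem~\ref{atiyah} then forces $w_d(\eta)=0$ for \emph{every} bundle $\eta$ over $S^d$, in particular for $\eta=\pi^*\xi$. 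Hence $w_d(\xi)=0$, and therefore $w(\xi)=1$ for every vector bundle $\xi$ over $X$.

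Finally, to read off the upper characteristic rank, I would note that the cohomology computation shows $r_X=d$, and then invoke Lemma~\ref{firstlemma}(2): since $w(\xi)=1$ for every $\xi$, we obtain $\mathrm{charrank}_X(\xi)=r_X-1=d-1$ for all bundles, whence $\mathrm{ucharrank}(X)=d-1$. The only real content lies in the middle step, where naturality of $w$ under $\pi^*$ combines with the Atiyah--Hirzebruch restriction on which spheres carry a bundle with nonzero top class; everything else is bookkeeping with the cohomology of $X$ and a direct appeal to Lemma~\ref{firstlemma}.
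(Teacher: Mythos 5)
Your proposal is correct and follows essentially the same route as the paper's own proof: vanishing of $H^i(X;\mathbb Z_2)$ for $0<i<d$ kills the lower Stiefel--Whitney classes, injectivity of $\pi^*$ in degree $d$ together with Theorem~\ref{atiyah} (using that $d$ is odd and $d\neq 1$) kills $w_d(\xi)$, and then $\mathrm{ucharrank}(X)=d-1$ follows. The only cosmetic difference is that you cite Lemma~\ref{firstlemma}(2) for the final step where the paper argues directly from $H^d(X;\mathbb Z_2)\cong\mathbb Z_2$.
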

\begin{proof} If $0 < i < d$, then obviously $w_i(\xi)= 0$. In addition, for any $\xi$ we have now  $\pi^*(w_d(\xi)) = w_d(\pi^*\xi) = 0$ by Theorem \ref{atiyah}. Since, $\pi^*$ is injective, we thus have $w_d(\xi)= 0$. We know that $H^d(X,\mathbb Z_2) \cong \mathbb Z_2$; this implies that $\mathrm{charrank}_X(\xi)\leq d-1$ for any $\xi$. The inequality $\mathrm{charrank}_X(\xi)\geq d-1$ for any $\xi$ is clear.
\end{proof}
\begin{Exm}
Let $L=L_m(\ell_1,\ldots ,\ell_n)$ denote the lens space which is a quotient of $S^{2n-1}$ by a free action of the cyclic group $\mathbb Z_m$ (see \cite{hatcher}, page 144). Then, we have an $m$-sheeted covering $\pi: S^{2n-1}\longrightarrow L$. If $n >1$ and $m$ is odd, then for any vector bundle $\xi$ over $L$, the total Stiefel-Whitney class $w(\xi)=1$. In particular, $\mathrm{ucharrank}(L)= 2n-2$.

\end{Exm}

There are conditions under which one can obtain a natural upper bound on the upper characteristic rank of a space. 
One such condition is the existence of a spherical class. Recall that a cohomology class $x\in H^k(X;\mathbb Z_2)$ is spherical if there exists a map $f:S^k\longrightarrow X$ with $f^*(x)\neq 0$. Note that a spherical class $x\in H^k(X;\mathbb Z_2)$ is indecomposable as an element of the cohomology ring. 
We shall show that the upper characteristic rank of a space is bounded above by the degree of a spherical class in most cases.  

\begin{Prop}\label{covering}
Let $X$ be a space and assume that $x\in H^k(X;\mathbb Z_2)$ is spherical, $k\neq 1,2,4,8$. Then there does not exist a vector bundle $\xi$ over $X$ with $w_k(\xi)=x$ and we have $\mathrm{charrank}_X(\xi) < k$ for any $\xi$. As a consequence,  for any covering $\pi:E\longrightarrow X$, we have 
$\mathrm{ucharrank}(E)<k$ (in particular, $\mathrm{ucharrank}(X)<k$ ). 

\end{Prop}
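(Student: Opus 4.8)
The plan is to establish the non-existence statement directly by pulling back along the sphere, then bootstrap to the bound on $\mathrm{charrank}_X(\xi)$, and finally transfer everything to a covering by lifting the defining map of the spherical class. For the non-existence claim, choose a map $f:S^k\longrightarrow X$ with $f^*(x)\neq 0$. If some bundle $\xi$ satisfied $w_k(\xi)=x$, then $w_k(f^*\xi)=f^*(w_k(\xi))=f^*(x)\neq 0$ would give a real vector bundle over $S^k$ with nonzero top Stiefel-Whitney class, contradicting Theorem\,\ref{atiyah} because $k\neq 1,2,4,8$.

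Next I would prove the bound on the characteristic rank. Suppose toward a contradiction that $\mathrm{charrank}_X(\xi)\geq k$ for some $\xi$. Then $x$, having degree $k$, is a polynomial in $w_1(\xi),\ldots,w_k(\xi)$. Every degree-$k$ monomial other than $w_k(\xi)$ itself is a product of classes of strictly smaller positive degree, hence decomposable; since $x$ is indecomposable the coefficient of the linear term $w_k(\xi)$ must be $1$, so $w_k(\xi)=x+D$ with $D$ decomposable. Pulling back along $f$ and using $\widetilde{H}^j(S^k;\mathbb Z_2)=0$ for $0<j<k$, every factor of positive degree below $k$ restricts to zero, so $f^*(D)=0$ and thus $f^*(w_k(\xi))=f^*(x)\neq 0$. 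As before this contradicts Theorem\,\ref{atiyah}, and therefore $\mathrm{charrank}_X(\xi)<k$ for every $\xi$.

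Finally, for a covering $\pi:E\longrightarrow X$, I would lift $f$. Since $k\neq 1,2,4,8$ forces $k\geq 3$, the sphere $S^k$ is simply connected, so the lifting criterion yields $\widetilde{f}:S^k\longrightarrow E$ with $\pi\circ\widetilde{f}=f$. Then $\widetilde{f}^*(\pi^*x)=f^*(x)\neq 0$, so $\pi^*x\in H^k(E;\mathbb Z_2)$ is a spherical class of degree $k$ on $E$; applying the bound just proved with $E$ in place of $X$ gives $\mathrm{charrank}_E(\xi)<k$ for every $\xi$, hence $\mathrm{ucharrank}(E)<k$, and the case $E=X$, $\pi=\mathrm{id}$, gives $\mathrm{ucharrank}(X)<k$. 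The main obstacle is the middle step: one must convert the hypothesis that $x$ is a polynomial in the Stiefel-Whitney classes into the clean relation $w_k(\xi)=x+(\text{decomposables})$, which is exactly where the indecomposability of a spherical class is essential, and then observe that the decomposable part dies under $f^*$ so that the Atiyah--Hirzebruch obstruction still applies even though $w_k(\xi)$ need not equal $x$ on the nose. The lifting in the last step is routine once one records that $k\geq 3$.
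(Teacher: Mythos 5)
Your proof is correct and follows essentially the same route as the paper: pull back along a map $f:S^k\longrightarrow X$ detecting the spherical class to contradict Theorem\,\ref{atiyah}, use indecomposability of $x$ for the bound on $\mathrm{charrank}_X(\xi)$, and lift $f$ through the covering to make $\pi^*(x)$ spherical on $E$. Your treatment of the middle step---writing $w_k(\xi)=x+D$ with $D$ decomposable and observing that $f^*(D)=0$ so the Atiyah--Hirzebruch obstruction still applies---is in fact slightly more explicit than the paper's terser phrasing, but it is the same argument.
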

\begin{proof} Assume that $\xi$ is a vector bundle over $X$ with $w_k(\xi)= x\neq 0$. Let $f:S^k\longrightarrow X$ be a map with $f^*(x)\neq 0$. Then one has  $w_k(f^*\xi)=f^*(w_k(\xi))\neq 0$, which is impossible by Theorem \ref{atiyah}. Hence there is no such $\xi$. Now since there is no $\xi$ with $w_k(\xi) = x$, and $x$ is indecomposable, we see that $\mathrm{charrank}_X(\xi) < k$ for any $\xi$. The rest of the claim follows from the fact that $f$ factors through the covering projection $\pi: E \longrightarrow X$. Indeed, we have $f = \pi \circ g$ for some $g : S^k \longrightarrow E$, and then $g^*(\pi^*(x))=f^*(x)\neq 0$, which means that the class $\pi^*(x)$ is spherical. The proof is finished by taking $E$ in the role of $X$ in the preceding considerations. \end{proof}

When a spherical class has degree $k=1,2,4$, or $8$, there can exist vector bundles of characteristic rank greater than or equal to the degree of the spherical class. For example, the sphere $S^k$ with $k=1,2,4$, or $8$ 
has upper characteristic rank equal to $k$. The complex projective space $\mathbb C\mathbb P^n$ has a spherical class in degree $2$, however 
$\mathrm{ucharrank}(\mathbb C\mathbb P^n)=2n$ (see Example\,\ref{complexprojective}). When a spherical class exists in degree $1,2,4$ or $8$, we have the following observation:  

{\bf Observation:} Let $X$ be a space and assume that $x\in H^k(X;\mathbb Z_2)$ is spherical, where $k=1,2,4,8$. Let $f:S^k\longrightarrow X$ be a map with $f^*(x)\neq 0$. Then for a vector bundle $\xi$ over $X$ with $\mathrm{charrank}_X(\xi)\geq k$, we can express $x$ as a polynomial $P(w_1(\xi),w_2(\xi),\ldots w_k(\xi))$. But then $0 \neq f^*(x) = f^*(P(w_1(\xi),w_2(\xi),\ldots w_k(\xi)))$ = $  P(f^*(w_1(\xi)),f^*(w_2(\xi)),\ldots,f^*(w_k(\xi)))$. Hence $f^*(w_k(\xi)) \neq 0$. Thus for any vector bundle $\xi$ over $X$ with $\mathrm{charrank}_X(\xi)\geq k$, we have $w_k(\xi)\neq 0$. 

When $X$ is a connected closed smooth $d$-manifold, the characteristic rank, $\mathrm{charrank}_X(\xi)$, of $\xi$ takes values in a certain specific range. 
We prove the following. 

\begin{Thm}\label{minusone}
Let $X$ be a connected closed smooth $d$-manifold. Assume that $2r_X\leq d$. Then, for any vector bundle $\xi$ over $X$, $\mathrm{charrank}_X(\xi)$ is either $d$ or less than $d-r_X$. 
\end{Thm}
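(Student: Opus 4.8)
The plan is to argue by contradiction, exploiting Poincaré duality with $\mathbb{Z}_2$-coefficients to rule out every value of $\mathrm{charrank}_X(\xi)$ in the band $[d-r_X,\,d-1]$. Concretely, I will suppose that some bundle $\xi$ has $\mathrm{charrank}_X(\xi)=k$ with $d-r_X\leq k\leq d-1$ and derive that in fact every class in $H^j(X;\mathbb{Z}_2)$, $0\leq j\leq d$, is a polynomial in the $w_i(\xi)$; this forces $\mathrm{charrank}_X(\xi)=d$, contradicting $k\leq d-1$. Since $\mathrm{charrank}_X(\xi)$ is always an integer in $\{0,\ldots,d\}$, excluding the band leaves exactly the stated alternatives.

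First I would record the two structural facts I need. Since $X$ is a connected closed $d$-manifold, $H^d(X;\mathbb{Z}_2)\cong\mathbb{Z}_2$ and the cup-product pairing $H^j(X;\mathbb{Z}_2)\times H^{d-j}(X;\mathbb{Z}_2)\to H^d(X;\mathbb{Z}_2)$ is non-degenerate for every $j$. Moreover, by definition of $r_X$ we have $\widetilde{H}^i(X;\mathbb{Z}_2)=0$ for $0<i<r_X$, and the polynomials in the classes $w_i(\xi)$ form a subring of $H^*(X;\mathbb{Z}_2)$.

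The key step is to produce a generator of $H^d$ lying in that subring. I pick a nonzero class $a\in H^{r_X}(X;\mathbb{Z}_2)$; non-degeneracy of the pairing gives $b\in H^{d-r_X}(X;\mathbb{Z}_2)$ with $a\cdot b\neq 0$, so $a\cdot b$ generates $H^d$. Here the hypothesis $2r_X\leq d$ does the decisive work: it yields $r_X\leq d-r_X\leq k$, so both $a$ and $b$ sit in degrees $\leq k$ and are therefore polynomials in the $w_i(\xi)$; hence so is their product $a\cdot b$, a generator of $H^d$. It remains to dispose of the intermediate degrees. For $k<m<d$ one has $1\leq d-m\leq d-k-1\leq r_X-1$, whence $H^{d-m}(X;\mathbb{Z}_2)=0$, and non-degeneracy of the pairing then forces $H^m(X;\mathbb{Z}_2)=0$. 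Combining these facts: classes of degree $\leq k$ are polynomials by the assumption $\mathrm{charrank}_X(\xi)=k$, $H^m$ vanishes for $k<m<d$, and $H^d$ is generated by the polynomial $a\cdot b$; thus every cohomology class up to degree $d$ is a polynomial in the $w_i(\xi)$, giving $\mathrm{charrank}_X(\xi)=d$ and the desired contradiction.

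The argument is essentially a bookkeeping of degrees once Poincaré duality is invoked, so I do not expect a serious obstacle. The only point requiring care is the role of the hypothesis $2r_X\leq d$: it is exactly what guarantees that the low-degree generator $a\in H^{r_X}$ already falls within the characteristic-rank range $[\,0,k\,]$, which is what lets the product $a\cdot b$ realize the top class as a polynomial in Stiefel-Whitney classes. Without it, $a$ could lie in a degree exceeding $k$ and the construction would break down.
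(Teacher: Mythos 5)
Your proof is correct and follows essentially the same route as the paper's: Poincar\'e duality gives the vanishing of $H^m(X;\mathbb Z_2)$ for $d-r_X<m<d$ and realizes the top class as a product $a\cdot b$ with $\deg a=r_X$ and $\deg b=d-r_X$, both of which lie within the characteristic-rank range because $2r_X\leq d$. The only (harmless) difference is that you observe directly that $a$ is a polynomial in the $w_i(\xi)$ since $r_X\leq k$, whereas the paper identifies $a$ with $w_{r_X}(\xi)$ via Lemmas~\ref{firstlemma}(1) and \ref{secondlemma}(1).
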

\begin{proof} Let $\xi$ be a vector bundle over $X$ with $\mathrm{charrank}_X(\xi)\geq d-r_X$. We shall show that $\mathrm{charrank}_X(\xi)=d$. 
Since, by Poincar\'e duality, the groups $H^j(X;\mathbb Z_2)=0$ for $d-r_X<j<d$, the proof will be complete if the non-zero element in $H^d(X;\mathbb Z_2)$ is a polynomial in the Stiefel-Whitney classes of $\xi$. 
As $\mathrm{charrank}_X(\xi)\geq d-r_X\geq r_X$, then by Lemma\,\ref{secondlemma}, $H^{r_X}(X;\mathbb Z_2)\cong \mathbb Z_2$. Hence  
$H^{d-r_X}(X;\mathbb Z_2)\cong \mathbb Z_2$. Let $a,b,x$ denote the non-zero cohomology classes in degrees $r_X$, $d-r_X$ and $d$ respectively. The non-degeneracy of the pairing 
$$H^{r_X}(X;\mathbb Z_2)\otimes H^{d-r_X}(X;\mathbb Z_2)\longrightarrow H^d(X;\mathbb Z_2)$$ 
implies that $a\cdot b=x$. As $\mathrm{charrank}_X(\xi)\geq d-r_X\geq r_X$ we have, by Lemma\,\ref{firstlemma} (1), $w_{r_X}(\xi)\neq 0$ and hence $w_{r_X}(\xi)=a$ and 
$b=p(w_1(\xi), w_2(\xi), \ldots)$ is a polynomial in the Stiefel-Whitney classes of $\xi$. This shows that 
$$x=w_{r_X}(\xi)\cdot p(w_1(\xi),w_2(\xi), \ldots)$$
is a polynomial in the Stiefel-Whitney classes of $\xi$. This completes the proof of the theorem. \end{proof}

Let $X$ be a connected closed smooth $d$-manifold. 
If $X$ is an unoriented boundary, then any monomial in the Stiefel-Whitney classes of $X$ of total degree $d$ is zero (see \cite{ms}, Theorem 4.9). Hence the non-zero element 
in $H^d(X;\mathbb Z_2)$ is never a polynomial in the Stiefel-Whitney classes of $X$. We thus have the following corollary.  

\begin{Cor}\label{unnecessarycorollary}
Let $X$ be a connected closed smooth $d$-manifold. Assume that $2r_X\leq d$. If $X$ is an unoriented boundary, then $\mathrm{charrank}(TX)<d-r_X$. \qed
\end{Cor}

\begin{Rem}\label{korbasremark}
Balko and Korba\v{s} \cite{balko} showed independently the following
stronger version of Corollary \ref{unnecessarycorollary}: For any connected closed smooth $d$-dimensional manifold
$X$ that is an unoriented boundary, if $s$, $s\leq \frac{d}{2}$, is (the biggest) such that
$H^s(X;\mathbb Z_2)\neq 0$, then $\mathrm{charrank}(X)<d-s$.
%We have been informed by Professor J\'{u}lius Korba\v{s} that he and L Balko have a similar observation for the characteristic rank of (unorientedly) null cobordant manifolds. 
\end{Rem}

%%%%%%%%%%%%%%%%%%%%%%%%%%%%%%%%%%%%%%%%%%%%%%%%%%%%%%%%%%%%%%%%%%%%%%%%%%%%%%%%%%%%%%%%%%%%%%%%%%%%%%

\section{Proof of Theorems\,\ref{zeroththeorem}, \ref{cuplength} and \ref{maintheorem}}

%%%%%%%%%%%%%%%%%%%%%%%%%%%%%%%%%%%%%%%%%%%%%%%%%%%%%%%%%%%%%%%%%%%%%%%%%%%%%%%%%%%%%%%%%%%%%%%%%%%%%%%%%%%

In this section we prove Theorems\,\ref{zeroththeorem}, \ref{cuplength},  and \ref{maintheorem}. The proof of Theorem\,\ref{zeroththeorem} is essentially the same as the 
proof of Theorem\,\ref{korbastheorem}. We reproduce it here for completeness. 

{\bf Proof of Theorem\,\ref{zeroththeorem}} 
Let $x=x_1 \cdot x_2\cdots x_s\neq 0$ be a non-zero product of cohomology classes of positive degree and of maximal length. Then $x\in H^d(X;\mathbb Z_2)$. 
If not, then by Poincar\'e duality one can find some $y$ in complementary dimension such that $x\cdot y\neq 0$ contradicting the maximality of $s$. By rearranging, we write 
$$x= \alpha_1\cdots \alpha_m\cdot\beta_1 \cdots \beta_n$$
where $\mathrm{deg}(\alpha_i)\leq k$ and $\mathrm{deg}(\beta_j)\geq k+1$. We note that $n\neq 0$. For otherwise the product $\alpha=\alpha_1\cdots \alpha_m$ which is now a polynomial in $w_1(\xi),\ldots, w_k(\xi)$, would be a non-zero element of total degree $d$ contradicting the assumption on $\xi$. Therefore, 
if $\beta = \beta_1\cdots \beta_n$, then $\mathrm{deg}(\beta)\geq k+1$. Thus $\mathrm{deg}(\alpha)\leq d- (k+1)$. Thus 
$$\begin{array}{rcl}
\mathrm{cup}(X) & = & m+n\\
                 & &\\
                & \leq & \frac{\mathrm{deg}(\alpha)}{r_X} + \frac{\mathrm{deg}(\beta)}{(k+1)}\\
                & &\\
                & = & \frac{\mathrm{deg}(\alpha)}{r_X} + \frac{(d-\mathrm{deg}(\alpha))}{(k+1))}\\
                & &\\
                & = & \frac{((k+1-r_X)\mathrm{deg}(\alpha)+dr_X)}{ r_X(k+1)}\\
                & & \\
                & \leq & \frac{((k+1-r_X)(d-(k+1))+dr_X)}{r_X(k+1)}\\
                & &\\
                &= & 1+\frac{d-k-1}{r_X}.\end{array}$$
This completes the proof. \qed

{\bf Proof of Theorem\,\ref{cuplength}.} Let $\xi$ be any vector bundle over $X$ with 
$$\mathrm{charrank}_X(\xi)=\mathrm{ucharrank}(X)=\mathrm{dim}(X).$$
Let $\mathrm{cup}(X)=k$. We shall show that some product of the Stiefel-Whitney classes of $\xi$ of length $k$ is non-zero. Let 
$$x=x_1\cdot x_2\cdots x_k\neq 0$$
be a non-zero product of cohomology classes $x_i\in H^*(X;\mathbb Z_2)$ with $\mathrm{deg}(x_i)\geq 1$. As $\mathrm{charrank}_X(\xi)=\mathrm{dim}(X)$, each 
$x_i$ can be written as a sum of monomials in the Stiefel-Whitney classes of $\xi$. Thus $x$ can be written as a sum of monomials in the Stiefel-Whitney classes of $\xi$, each of length at least $k$. Note that the monomials of length greater than $k$ are zero by hypothesis. As $x\neq 0$, it follows that some monomial in the Stiefel-Whitney classes of $\xi$ of length $k$ is non-zero. This completes the proof of the theorem.\qed 

\begin{Rem}
\begin{enumerate}
\item The proof of Theorem\,\ref{cuplength} actually shows that if some  product 
$x=x_1\cdots x_t\neq 0$ with $1\leq \mathrm{deg}(x_i)\leq \ell$, then for any vector bundle $\xi$ over $X$ with $\mathrm{charrank}_X(\xi)\geq\ell$ some product of the Stiefel-Whitney classes of $\xi$ of length greater than or equal to $t$ is non-zero. 
\item The conclusion of Theorem\,\ref{cuplength} is not true if $\mathrm{ucharrank}(X)<\mathrm{dim}(X)$. If $X=S^k$, $k\neq 1,2,4,8$, then 
$\mathrm{ucharrank}(X)=k-1<k$, $\mathrm{cup}(X)=1$ however $w(\xi)=1$ for any vector bundle $\xi$ over $X$. 
\end{enumerate}
\end{Rem}

{\bf Proof of Theorem\,\ref{maintheorem}.} First note that the assumption $\mathrm{ucharrank}(X)\geq 1$ is odd clearly implies that the function 
$$f:\mathrm{Vect}_{\mathbb R}(X)\longrightarrow \mathbb Z_2$$ 
defined by 
$$f(\xi)=\mathrm{charrank}_X(\xi) ~~~~~(\mathrm{mod}~~~~~ 2)$$
is surjective. We shall now check that $f$ is a semi-group homomorphism. 
To see this, let $\xi$ and $\eta$ be two bundles over $X$. We have the following cases. 

If $\xi$ and $\eta$ are both orientable, then so is $\xi\oplus\eta$. Hence $w_1(\xi\oplus\eta)=0$. As $r_X=1$, it follows that 
$\mathrm{charrank}_X(\xi\oplus\eta)=0$. The same argument shows that $\mathrm{charrank}_X(\xi)=0=\mathrm{charrank}_X(\eta)$. Thus in this 
case we have $f(\xi\oplus\eta)=f(\xi)+f(\eta)$. 

Next suppose that both $\xi$ and $\eta$ are non-orientable. Then, on the one hand, $\xi\oplus\eta$ is orientable and hence $f(\xi\oplus\eta)=0$ as $r_X=1$. On the other hand, as $\xi$ and $\eta$ are non-orientable, we have
$$f(\xi)= 1 =f(\eta).$$ 
Thus, we have the equality $f(\xi\oplus\eta)=f(\xi)+f(\eta)$. 

Finally, assume that $\xi$ is orientable and $\eta$ is not. Then $\xi\oplus \eta$ is not orientable and hence 
$f(\xi\oplus \eta)=1$, $f(\xi)=0$ and $f(\eta)=1$. So in this case we have $f(\xi\oplus\eta)=f(\xi)+f(\eta)$. This completes the proof that $f$ is a semi-group homomorphism. 
 
This gives rise to a surjective homomorphism 
$$\widetilde{f}: KO(X)\longrightarrow \mathbb Z_2$$
defined by $\widetilde{f}(\xi-\eta)=f(\xi)-f(\eta)$. It is now clear that $\tilde{f}$ is zero on the $\mathbb Z$ summand of $KO(X)=\mathbb Z\oplus \widetilde{KO}(X)$ and restricts to an epimorphism $\tilde{f}:\widetilde{KO}(X)\longrightarrow \mathbb Z_2$. 
This completes the proof. \qed

%%%%%%%%%%%%%%%%%%%%%%%%%%%%%%%%%%%%%%%%%%%%%%
\section{Computations and examples}%%%%%%%%%%%%%%%%%%%%%%%%%%%%%%%%%%%%%%%%%%%%%%%%%%%%%%%%%%%%%%%%%%%%%%%%%%%%%%
%%%%%%%%%%%%%%%%%%%%%%%%%%%%%%%%%%%%%%%%%%%%%%

In this section we give a proof of Theorem\,\ref{mainproposition} and compute the characteristic rank of vector bundles over $X$, where $X$ is one of the following: the product of spheres $S^d\times S^m$, the real or complex projective space, the product space $S^1\times \mathbb C\mathbb P^n$, the Moore space $M(\mathbb Z_2,n)$ and the stunted projective space $\mathbb R\mathbb P^n/\mathbb R\mathbb P^m$. 

We begin by describing the characteristic rank of vector bundles over  $X=S^d\times S^m$.  
First note that if $d=m$, then as $r_X=d$ and $\mathrm{dim}_{\mathbb Z_2}H^d(X;\mathbb Z_2)=2$, it follows from Lemma\,\ref{secondlemma} (1) that 
$\mathrm{ucharrank}(X)=r_X-1=d-1$. 

\begin{Lem}
Let $X=S^d\times S^m$ with $d<m$. Then, 
$$\mathrm{ucharrank}(X)=\left\{ \begin{array}{cl}
                               d-1 & \mbox{if $d\neq 1, 2,4, 8$,}\\
                               m-1  & \mbox{if $d=1,2,4, 8$, $m\neq 2,4,8$}\\
                               d+m  & \mbox{if $d,m=1,2,4,8$.}
                               \end{array}\right.
                               $$
\end{Lem}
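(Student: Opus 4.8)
The plan is to read off the cohomology of $X$ and then determine $\mathrm{ucharrank}(X)$ by establishing matching upper and lower bounds in each of the three cases. Since $d<m$, the ring $H^*(X;\mathbb{Z}_2)$ is $\mathbb{Z}_2[a,b]/(a^2,b^2)$ with $\deg a=d$ and $\deg b=m$, so the only nonzero groups lie in degrees $0,d,m,d+m$ and $r_X=d$. Because $H^j(X;\mathbb{Z}_2)=0$ for $0<j<d$, every bundle $\xi$ has $w_i(\xi)=0$ for $0<i<d$, so the first potentially nonzero Stiefel--Whitney class is $w_d(\xi)\in\{0,a\}$; likewise the only class that can contribute in degree $m$ is $w_m(\xi)\in\{0,b\}$, since any monomial of degree $m$ built from lower classes is a power of $a=w_d(\xi)$ of exponent $\geq 2$ and hence vanishes. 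Consequently $\mathrm{charrank}_X(\xi)$ can only equal $d-1$, $m-1$ or $d+m$, according to whether $a$, and then $b$, is realized as a Stiefel--Whitney class of $\xi$; this reduces everything to deciding which of $a,b$ is realizable.

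Both $a$ and $b$ are spherical: the inclusions $i\colon S^d\hookrightarrow X$ and $j\colon S^m\hookrightarrow X$ of the two factors satisfy $i^*a\neq 0$ and $j^*b\neq 0$. This yields all the upper bounds through Proposition \ref{covering} and Theorem \ref{atiyah}. When $d\neq 1,2,4,8$, the class $a$ is spherical in a degree not in $\{1,2,4,8\}$, so Proposition \ref{covering} gives $\mathrm{charrank}_X(\xi)<d$ for every $\xi$; combined with the general lower bound $\mathrm{charrank}_X(\xi)\geq r_X-1=d-1$ this forces $\mathrm{ucharrank}(X)=d-1$. When $d\in\{1,2,4,8\}$ but $m\neq 2,4,8$, we have $m>d\geq 1$, so $m\neq 1,2,4,8$ and $b$ is spherical in such a degree; Proposition \ref{covering} then gives $\mathrm{charrank}_X(\xi)<m$, that is, $\mathrm{ucharrank}(X)\leq m-1$.

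For the matching lower bounds I would exhibit explicit bundles by pulling back Hopf bundles along the projections $p_1\colon X\to S^d$ and $p_2\colon X\to S^m$. If $d\in\{1,2,4,8\}$, the Hopf bundle $\nu_d$ over $S^d$ has $w_d(\nu_d)\neq 0$, so $\xi=p_1^*\nu_d$ satisfies $w_d(\xi)=a$; then every class in degrees $\leq m-1$ is polynomial in the Stiefel--Whitney classes of $\xi$, giving $\mathrm{charrank}_X(\xi)=m-1$ and hence $\mathrm{ucharrank}(X)=m-1$ in the second case. If both $d,m\in\{1,2,4,8\}$, I take $\xi=p_1^*\nu_d\oplus p_2^*\nu_m$, whose total class is $w(\xi)=(1+a)(1+b)=1+a+b+ab$; thus $a=w_d(\xi)$, $b=w_m(\xi)$ and $ab=w_{d+m}(\xi)$ are all Stiefel--Whitney classes, so $\mathrm{charrank}_X(\xi)=d+m=\dim X$ and $\mathrm{ucharrank}(X)=d+m$.

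The argument splits cleanly: the upper bounds are essentially automatic once one observes that $a$ and $b$ are spherical and feeds the Atiyah--Hirzebruch restriction (Theorem \ref{atiyah}) through Proposition \ref{covering}, whereas the lower bounds carry the realizability content, namely the existence of Hopf bundles in dimensions $1,2,4,8$ together with the multiplicativity of the total Stiefel--Whitney class under Whitney sum. I expect the third case to be the main point to get right: one must check that the Whitney sum of the two pulled-back Hopf bundles simultaneously realizes $a$, $b$ and their product $ab$, as this is precisely what pushes $\mathrm{charrank}_X(\xi)$ all the way up to $\dim X$.
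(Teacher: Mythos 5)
Your proposal is correct and follows essentially the same route as the paper: upper bounds from the Atiyah--Hirzebruch theorem applied to the restrictions to the sphere factors (you package this through Proposition~\ref{covering} via sphericity of the generators, while the paper pulls back along the inclusions directly), and lower bounds from the pullbacks of Hopf bundles along the projections, with the Whitney sum $\pi_1^*\nu_d\oplus\pi_2^*\nu_m$ realizing $a$, $b$ and $ab$ in the third case. Your preliminary observation that $\mathrm{charrank}_X(\xi)$ can only take the values $d-1$, $m-1$ or $d+m$ is a harmless organizational addition, not a different argument.
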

\begin{proof}  The lemma follows from the observations made after Theorem\,\ref{atiyah}. We note that $r_X=d$ and consider the maps 
$$S^d\stackrel{i}\longrightarrow S^d\times S^m\stackrel{\pi_1}\longrightarrow S^d,$$
$$S^m\stackrel{j}\longrightarrow S^d\times S^m\stackrel{\pi_2}\longrightarrow S^m,$$
where $i$ is the map $x\mapsto (x,y)$ for a fixed $y\in S^m$ and $\pi_1$ and $\pi_2$ are projections onto the the first and second factors. The map $j$ is similarly defined. 
The homomorphisms $i^*$ and $j^*$ are isomorphisms (with inverses $\pi_1^*$ and $\pi_2^*$ respectively) in degree $d$ and $m$ respectively. 

Assume that $d\neq 1,2,4,8$ and let $\xi$ be a vector bundle over $X$. Then as $w_d(i^*\xi)=0$, it follows that $w_d(\xi)=0$. Thus by Lemma\,\ref{firstlemma} (1) we have $\mathrm{charrank}_X(\xi)=r_X-1=d-1$. 

Next assume that $d=1,2,4,8$ and $m\neq 2,4,8$. Let $\nu_d$ denote the Hopf bundle over $S^d$. As $w_d(\nu_d)\neq 0$, it follows that $w_d(\pi_1^*\nu_d)\neq 0$. 
Thus $\mathrm{charrank}_{\pi_1^*\nu_d}(X)\geq m-1$. Since $m\neq 1,2,4,8$, for any vector bundle $\xi$ over $X$ we must have $w_m(\xi)=0$. This completes the proof that $\mathrm{charrank}_{\pi_1^*\nu_d}(X)=m-1$ and that $\mathrm{ucharrank}(X)=m-1$. 

Finally, let $d=1,2,4,8$ and $m=1,2,4,8$. Let $\nu_d$ and $\nu_m$ denote the Hopf bundles over $S^d$ and $S^m$ respectively. Then, clearly $w_d(\pi_1^*\nu_d\oplus \pi_2^*\nu_m)\neq 0$, $w_m(\pi_1^*\nu_d\oplus \pi_2^*\nu_m)\neq 0$ and $w_{d+m}(\pi_1^*\nu_d\oplus \pi_2^*\nu_m)\neq 0$. This shows that in this case 
$\mathrm{charrank}(X)=d+m$. This completes the proof of the lemma.\end{proof}

We now come to the proof of Theorem\,\ref{mainproposition}. First recall that the Dold manifold $P(m,n)$ is an $(m+2n)$-dimensional manifold 
defined as the quotient of $S^m\times\mathbb C\mathbb P^n$ by the fixed point free involution $(x,z)\mapsto (-x,\bar{z})$. This gives rise to a two-fold covering 
$$\mathbb Z_2\hookrightarrow S^m\times \mathbb C\mathbb P^n\longrightarrow P(m,n),$$
and via the projection $S^m\times \mathbb C\mathbb P^n\longrightarrow S^m$, a fiber bundle 
$$\mathbb C\mathbb P^n\hookrightarrow P(m,n)\longrightarrow \mathbb R\mathbb P^m$$
with fiber $\mathbb C\mathbb P^n$ and structure group $\mathbb Z_2$. In particular, for $n=1$, we have a fiber bundle 
$$S^2\hookrightarrow P(m,1)\longrightarrow \mathbb R\mathbb P^m.$$ 

The $\mathbb Z_2$-cohomology ring of $P(m,n)$ is given by \cite{do} 
$$H^*(P(m,n);\mathbb Z_2) = \mathbb Z_2[c,d]/(c^{m+1}, d^{n+1})$$
where $c\in H^1(P(m,n);\mathbb Z_2)$ and $d\in H^2(P(m,n);\mathbb Z_2)$. 

We shall make use of the following result which shows the existence of certain bundles with suitable Stiefel-Whitney classes. 
\begin{Prop}{\rm(\cite{stong}, page 86) } \label{ust} Over $P(m,n)$,
\begin{enumerate}
\item there exists a line bundle $\xi$ with total Stiefel-Whitney class $w(\xi)=1+c$;
\item there exists a $2$-plane bundle $\eta$ with total Stiefel-Whitney class $w(\eta)=1+c+d$.  
\qed 
\end{enumerate}
\end{Prop}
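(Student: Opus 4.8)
The plan is to construct both bundles explicitly and then read off their Stiefel-Whitney classes. For (1), recall that real line bundles over a space are classified by $H^1(-;\mathbb Z_2)$ via $w_1$. Since $c\in H^1(P(m,n);\mathbb Z_2)$, there is a line bundle $\xi$ with $w_1(\xi)=c$, so $w(\xi)=1+c$. Concretely one may take $\xi=p^*\gamma$, where $p:P(m,n)\to\mathbb R\mathbb P^m$ is the projection of the fiber bundle recalled above and $\gamma$ is the tautological line bundle over $\mathbb R\mathbb P^m$; since $p^*$ sends the generator of $H^1(\mathbb R\mathbb P^m;\mathbb Z_2)$ to $c$, this gives $w(\xi)=1+c$. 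Equivalently, $\xi$ is the real line bundle associated to the double cover $\rho:S^m\times\mathbb C\mathbb P^n\to P(m,n)$.

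For (2), I would produce $\eta$ by descending a bundle along the double cover. Let $\lambda\to\mathbb C\mathbb P^n$ be the tautological complex line bundle and $q:S^m\times\mathbb C\mathbb P^n\to\mathbb C\mathbb P^n$ the projection; consider the underlying real $2$-plane bundle $(q^*\lambda)_{\mathbb R}$. Fiberwise complex conjugation $w\mapsto\bar w$ covers the conjugation $z\mapsto\bar z$ on $\mathbb C\mathbb P^n$, so $\tilde\tau(x,w)=(-x,\bar w)$ is a real bundle involution of $(q^*\lambda)_{\mathbb R}$ lifting the deck transformation $\tau(x,z)=(-x,\bar z)$. As $\tilde\tau^2=\mathrm{id}$, the quotient $\eta:=(q^*\lambda)_{\mathbb R}/\tilde\tau$ is a real $2$-plane bundle over $P(m,n)$ with $\rho^*\eta\cong(q^*\lambda)_{\mathbb R}$.

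Since $\eta$ has rank $2$, only $w_1(\eta)$ and $w_2(\eta)$ can be nonzero, and it remains to show they equal $c$ and $d$. Because the realification of a complex line bundle has total class $1+(c_1\bmod 2)$, we get $w((q^*\lambda)_{\mathbb R})=1+\beta$, where $\beta$ generates $H^2(\mathbb C\mathbb P^n;\mathbb Z_2)$. Using $\rho^*c=0$ and $\rho^*d=\beta$ together with the Gysin sequence of the double cover (whose $\rho^*$ has kernel the image of $\cup\, c$), the identity $\rho^*w(\eta)=1+\beta$ pins the classes down only modulo the kernel of $\rho^*$: it forces $w_1(\eta)\in\{0,c\}$ and $w_2(\eta)\in\{d,\,c^2+d\}$.

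Resolving these two ambiguities is the main point, since passing to the double cover loses information ($\rho^*$ is not injective). For $w_1$, note that fiberwise conjugation on $\mathbb C\cong\mathbb R^2$ is orientation-reversing, so $\tilde\tau$ acts by $-1$ on $\det(q^*\lambda)_{\mathbb R}$; hence $\det\eta$ is the nontrivial line bundle of the double cover, i.e.\ $w_1(\eta)=c$. For $w_2$, I would restrict $\eta$ to the copy of $\mathbb R\mathbb P^m=(S^m\times\{z_0\})/\tau$ coming from a conjugation-fixed point $z_0\in\mathbb C\mathbb P^n$; splitting the fiber $\mathbb C=\mathbb R\oplus i\mathbb R$ into the $\pm1$-eigenspaces of fiberwise conjugation exhibits $\eta|_{\mathbb R\mathbb P^m}\cong\varepsilon^1\oplus\gamma$, so $w_2(\eta)$ restricts to $0$. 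Since $d$ restricts to $0$ while $c^2$ restricts to $a^2\neq0$ (for $m\geq2$; for $m=1$ one has $c^2=0$, so the two options already coincide), this rules out $c^2+d$ and yields $w_2(\eta)=d$. Hence $w(\eta)=1+c+d$, and the determinant and restriction arguments are exactly what recover the classes that the cover computation left undetermined.
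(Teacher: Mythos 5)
The paper gives no proof of this proposition at all: it is quoted from Stong (\cite{stong}, p.~86) and stated with a tombstone, so your argument is necessarily a different route, namely an actual construction. The construction is correct and is essentially the classical one. Part (1) is immediate from the classification of real line bundles by $w_1\in H^1(-;\mathbb Z_2)$, and your identifications of $\xi$ (pullback of the tautological bundle along $P(m,n)\to\mathbb R\mathbb P^m$, equivalently the line bundle of the double cover) are right. For part (2), descending $(q^*\lambda)_{\mathbb R}$ along the conjugation lift $\tilde\tau$ does produce a $2$-plane bundle $\eta$ with $\rho^*\eta\cong(q^*\lambda)_{\mathbb R}$, and you correctly recognize that the pullback computation determines $w_1(\eta),w_2(\eta)$ only modulo $\ker\rho^*=\mathrm{im}(\cup\, c)$. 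The determinant argument ($\tilde\tau$ is fiberwise conjugate-linear, hence orientation-reversing, so $\det\eta$ is the nontrivial line bundle of the cover) correctly yields $w_1(\eta)=c$, and the eigenspace splitting over the real-point copy of $\mathbb R\mathbb P^m$ correctly gives $\eta|_{\mathbb R\mathbb P^m}\cong\varepsilon^1\oplus\gamma$, hence $w_2(\eta)|_{\mathbb R\mathbb P^m}=0$.

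The one assertion you leave unjustified is that $d$ restricts to $0$ on this $\mathbb R\mathbb P^m$. This is precisely the question of which of the two classes $d$ and $d+c^2$ (both restrict to the fiber generator of $H^2(\mathbb C\mathbb P^n;\mathbb Z_2)$) is the $d$ of the presentation $\mathbb Z_2[c,d]/(c^{m+1},d^{n+1})$, and the presentation alone does not always distinguish them: for instance when $n=1$ and $m\leq 3$ one has $(d+c^2)^2=c^4=0$, so both candidates satisfy the stated relations. The standard way to close this is to note that the real-point section is exactly the inclusion $P(m,0)=\mathbb R\mathbb P^m\hookrightarrow P(m,n)$ and that Dold's generators are chosen compatibly with these inclusions, with $d\mapsto 0$ in $H^*(P(m,0))$; alternatively, for every use made of the proposition in this paper it suffices that $w_2(\eta)$ be \emph{some} degree-two class which together with $c$ generates the ring, and both candidates do. With that normalization made explicit, your proof is complete.
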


{\bf Proof of Theorem\,\ref{mainproposition}.}
Let $X=\mathbb R\mathbb P^n$ be the real projective space. Then $r_X=1$. Let $\xi$ be a vector bundle over $X$. If $\xi$ is orientable, then $w_1(\xi)=0$ and hence, by Lemma\,\ref{firstlemma} (1), $\mathrm{charrank}_X(\xi)=0$. On the other hand if $\xi$ is non-orientable, then $w_1(\xi)\neq 0$ and hence $\mathrm{charrank}_X(\xi)=n$ as 
$H^*(X;\mathbb Z_2)$ is polynomially generated by the non-zero element in $H^1(X;\mathbb Z_2)$. This proves (1). 

To prove (2), let $X=S^1\times \mathbb C\mathbb P^n$, then $r_X=1$. The $\mathbb Z_2$-cohomology ring of $X$ is given by 
$$H^*(X;\mathbb Z_2)=H^*(S^1;\mathbb Z_2)\otimes H^*(\mathbb C\mathbb P^n;\mathbb Z_2)\cong\mathbb Z_2[a,b]/(a^2, b^{n+1}),$$
where $a$ is of degree one and $b$ is of degree two. Let $\xi$ be a vector bundle over $X$. Evidently, $\mathrm{charrank}_X(\xi)$ is completely determined by 
the first two Stiefel-Whitney classes of $\xi$. 

We look at several cases. 
If $w_1(\xi)$ and $w_2(\xi)$ are both non-zero, then the description of the cohomology ring 
$H^*(X;\mathbb Z_2)$ forces $\mathrm{charrank}_X(\xi)=2n+1$. If $w_1(\xi)=0$, we have $\mathrm{charrank}_X(\xi)=0$. If $w_1(\xi)\neq 0$ and $w_2(\xi)=0$, then $\mathrm{charrank}_X(\xi)=1$. This completes the proof of (2). 

Finally, the proof of (3) is similar to the case (2) above in view of Proposition\,\ref{ust}. Indeed, if $w_1(\eta)=c\neq 0$ and $w_2(\eta)=d\neq 0$
(there exists such an $\eta$; see Proposition\, \ref{ust}), then we have
$\mathrm{charrank}_X(\eta)=2n+m$. If $w_1(\xi)=c\neq 0$ and $w_2(\xi)=0$ (there exists
such a $\xi$; see Proposition 4.2), we have $\mathrm{charrank}_X(\xi)=1$, as $c^2\neq
d$. For other possible vector bundles, the situation is clear. This completes the proof of (3) and the theorem.\qed

\begin{Rem} \label{importantremark} (1) We remark that, in the case (2) of the  theorem above, there exists a line bundle $\gamma$ over $X$ such that $w_1(\gamma)\neq 0$. Thus, $\mathrm{charrank}_X(\gamma)=1$. We also can  find a $2$-plane bundle $\eta$ over $X$ such that $w_1(\eta)=0$ and $w_2(\eta)\neq 0$. Thus $\mathrm{charrank}_X(\eta)=0$. Then for the Whitney sum $\gamma\oplus\eta$ we have 
$w_1(\gamma\oplus\eta)=w_1(\gamma)\neq 0$ and $w_2(\gamma\oplus\eta)=w_2(\eta)\neq 0$ and hence 
$\mathrm{charrank}_{\gamma\oplus\eta}(X)=2n+1$. The bundles $\gamma$ and $\eta$ can be obtained as the pull backs of suitable canonical bundles over $S^1=\mathbb R\mathbb P^1$ and $\mathbb C\mathbb P^n$ via the projections. 
Thus, over $X=S^1\times \mathbb C\mathbb P^n$, there  exist vector bundles having all the three possible characteristic ranks.

(2) The function $f:\mathrm{Vect}_{\mathbb R}(X)\longrightarrow \mathbb Z_2$ constructed in the proof of Theorem\,\ref{maintheorem} is in general not a semi-ring homomorphism. For example, 
let $\gamma$ denote the canonical line bundle over $X=\mathbb R\mathbb P^n$ ($n$ odd). Then $w_1(\gamma)\neq 0$ and hence $f(\gamma)=1\in\mathbb Z_2$. Now, as $\gamma\otimes \gamma$ is a trivial bundle, we have
$w_1(\gamma\otimes \gamma)=0$ and therefore, $f(\gamma\otimes\gamma)=0\in\mathbb Z_2$. Clearly, $0=f(\gamma\otimes\gamma)\neq f(\gamma)\cdot f(\gamma)=1$.
\end{Rem}

\begin{Exm}\label{complexprojective}
Let $X=\mathbb C\mathbb P^n$ be the complex projective space. Then $r_X=2$. Let $\xi$ be a vector bundle over $X$. Then 
$\mathrm{charrank}_X(\xi)=1$ if $w_2(\xi)=0$ and $\mathrm{charrank}_X(\xi)=2n$ if $w_2(X)\neq 0$. For the canonical (complex) line bundle $\gamma$ over $X$ 
we have $\mathrm{charrank}_X(\gamma)=2n$. 
\end{Exm}

We now give some examples where the bound for the cup length given by Theorem\,\ref{zeroththeorem} is sharper than that given by Theorem\,\ref{korbastheorem}. 

\begin{Exm}\label{4.6}
Let $X= S^2\times S^6$ and let $\pi_1:X\longrightarrow S^2$ be the projection. Let, as usual, $\nu_2$ denote the Hopf bundle over $S^2$. Then, $\mathrm{charrank}_{TX}(X)=1$, and $\mathrm{charrank}_X(\xi)=5$ where $\xi=\pi_1^*\nu_2$. The bundle $\xi$ satisfies the condition of Theorem\,\ref{zeroththeorem} with $k=5$. 
 Then the bound for the cup length, $\mathrm{cup}(X$), of $X$ given by Theorem\,\ref{korbastheorem} is $4$ and that given by Theorem\,\ref{zeroththeorem} is $2$. 
\end{Exm}

\begin{Exm}\label{4.7} 
Let $X = S^4\times S^8$. Let $\xi=\pi_1^*\nu_4\oplus \pi_2^*\nu_8$. Then, $\mathrm{charrank}_{TX}(X)=3$ and $\mathrm{charrank}_X(\xi)= 12$. Then $\xi$ satisfies the condition of Theorem\,\ref{zeroththeorem} with $k=7$. Then the bound for the cup length, $\mathrm{cup}(X$), of $X$ given by Theorem\,\ref{korbastheorem} is $3$ and that given by Theorem\,\ref{zeroththeorem} is $2$.
\end{Exm}

\begin{Rem}
These sharper estimates of Examples \ref{4.6} and \ref{4.7} can also be obtained from Theorem A \cite{korbas2}.
\end{Rem}

We now compute $\mathrm{charrank}_X(\xi)$ where $X$ is the Moore space $M(\mathbb Z_2,n)$, $n>1$, and $\xi$ a vector bundle over $X$. 
We recall that $X$ is an $(n-1)$-connected $(n+1)$-dimensional $CW$-complex. Note that $M(\mathbb Z_2, 1)$ is the real projective space $\mathbb R\mathbb P^2$ and $M(\mathbb Z_2,n)$ is the iterated suspension $\Sigma^nM(\mathbb Z_2,1)$. 
We refer to \cite{hatcher} for basic properties of Moore spaces. 
We prove the following.

\begin{Prop} \label{secondlastprop}
Let $X$ denote the Moore space $M(\mathbb Z_2,n)$ with $n>1$. Then,  
$$\mathrm{ucharrank}(X)= \left\{\begin{array}{cl}
 n-1 & \mbox{if $n\neq 2$}\\
 3 & \mbox{if $n=2$}
\end{array}\right.$$
\end{Prop}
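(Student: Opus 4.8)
The plan is to pin down $\mathrm{ucharrank}(X)$ by controlling the single class $w_n(\xi)$. Since $X = \Sigma^{n-1}\mathbb{RP}^2$, we have $\widetilde{H}^i(X;\mathbb{Z}_2) = \mathbb{Z}_2$ exactly for $i = n, n+1$ and all cup products of positive-degree classes vanish; in particular $r_X = n$, so $\mathrm{ucharrank}(X) \geq n-1$ always. Writing $x_n, x_{n+1}$ for the two generators, the bottom-cell inclusion $i: S^n \hookrightarrow X$ induces, via the Puppe sequence of $S^n \xrightarrow{2} S^n$, an isomorphism $i^*: H^n(X;\mathbb{Z}_2) \to H^n(S^n;\mathbb{Z}_2)$, so $x_n$ is spherical. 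Because products vanish and $H^j(X;\mathbb{Z}_2) = 0$ for $0 < j < n$, for any $\xi$ one checks that $\mathrm{charrank}_X(\xi) \geq n$ holds precisely when $w_n(\xi) = x_n$, and $\mathrm{charrank}_X(\xi) = n+1 = \dim X$ precisely when in addition $w_{n+1}(\xi) = x_{n+1}$. Thus the computation reduces to deciding, for each $n$, whether a bundle with $w_n(\xi) \neq 0$ exists.

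First I would dispose of $n \notin \{2,4,8\}$ (recall $n > 1$): here $x_n$ is spherical of degree $n \neq 1,2,4,8$, so Proposition\,\ref{covering} gives $\mathrm{charrank}_X(\xi) < n$ for every $\xi$, hence $\mathrm{ucharrank}(X) = n-1$. For the two remaining values $n = 4, 8$ I would compute $\widetilde{KO}(X)$ from the Puppe sequence $S^n \xrightarrow{2} S^n \xrightarrow{i} X \xrightarrow{q} S^{n+1}$, using $\widetilde{KO}(S^m) = \pi_m(KO)$ and the fact that the degree-$2$ self-maps act by multiplication by $2$. For $n = 4$ this gives $\widetilde{KO}(X) = 0$, so by Lemma\,\ref{firstlemma}\,(3) every bundle has $w(\xi) = 1$ and $\mathrm{charrank}_X(\xi) = 3 = n-1$. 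For $n = 8$ it gives $\widetilde{KO}(X) \cong \mathbb{Z}_2$, generated by $q^*\theta$ with $\theta$ a generator of $\widetilde{KO}(S^9)$; but $9 \notin \{1,2,4,8\}$ forces $w(\theta) = 1$ by Theorem\,\ref{atiyah}, whence $w(q^*\theta) = q^*w(\theta) = 1$. As the total Stiefel-Whitney class is a stable invariant and every bundle over $X$ is stably $0$ or $q^*\theta$, every bundle has $w(\xi) = 1$, so Lemma\,\ref{firstlemma}\,(2) gives $\mathrm{charrank}_X(\xi) = 7 = n-1$.

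It remains to treat $n = 2$, where the answer jumps to $\dim X = 3$. The same Puppe sequence gives $\widetilde{KO}(X) \cong \mathbb{Z}_2$ with $i^*: \widetilde{KO}(X) \to \widetilde{KO}(S^2)$ an isomorphism, so I would choose the bundle $\zeta$ with $i^*\zeta$ the reduced Hopf bundle $\nu_2$; then $i^* w_2(\zeta) = w_2(\nu_2) \neq 0$, and since $i^*$ is injective on $H^2$ we get $w_2(\zeta) = x_2$. To reach the top class I would invoke the Wu formula $Sq^1 w_2 = w_1 w_2 + w_3$ together with $w_1(\zeta) \in H^1(X;\mathbb{Z}_2) = 0$, giving $w_3(\zeta) = Sq^1 w_2(\zeta) = Sq^1 x_2$; as $X = \Sigma \mathbb{RP}^2$ and $Sq^1$ commutes with suspension, $Sq^1: H^2(X;\mathbb{Z}_2) \to H^3(X;\mathbb{Z}_2)$ is the suspension of the isomorphism $Sq^1: H^1(\mathbb{RP}^2;\mathbb{Z}_2) \to H^2(\mathbb{RP}^2;\mathbb{Z}_2)$, so $Sq^1 x_2 = x_3 \neq 0$. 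Hence $w_2(\zeta) = x_2$ and $w_3(\zeta) = x_3$, every class of degree $\leq 3$ is a polynomial in the Stiefel-Whitney classes of $\zeta$, and $\mathrm{ucharrank}(X) = 3$.

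The main obstacle is the power-of-two case: Proposition\,\ref{covering} fails precisely for $n = 4, 8$, so the $\widetilde{KO}$ computations cannot be avoided there, and the delicate point is $n = 8$, where $\widetilde{KO}(X)$ is nonzero yet its generator still carries trivial total Stiefel-Whitney class. Identifying that generator as pulled back from $S^9$ (rather than supported on the bottom cell) is exactly what forces $w_n \equiv 0$ and keeps the upper characteristic rank at $n-1$.
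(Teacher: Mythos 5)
Your proof is correct and follows essentially the same route as the paper's: Proposition\,\ref{covering} applied to the spherical bottom class for $n\neq 2,4,8$, the $\widetilde{KO}$ exact sequence of the Puppe cofibration $S^n\stackrel{2}{\to}S^n\to X\to S^{n+1}$ to kill all Stiefel--Whitney classes when $n=4,8$, and the Wu formula $Sq^1w_2=w_1w_2+w_3$ together with compatibility of $Sq^1$ with suspension to produce a bundle realizing both top classes when $n=2$. The only difference is cosmetic: you compute $\widetilde{KO}(X)$ explicitly in the $n=4,8$ cases, whereas the paper only records the surjectivity of $\widetilde{KO}(S^{n+1})\to\widetilde{KO}(X)$, which is all that is needed.
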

\begin{proof} The Moore space $X$ is an $(n+1)$-dimensional $CW$-complex with $n$-skeleton $S^n$. Let $i:S^n\hookrightarrow X$ denote the inclusion map. 
Using the cellular chain complex, for example, it is easy to see that the homomorphism 
$$i^*:H^n(X;\mathbb Z_2)\longrightarrow H^n(S^n;\mathbb Z_2)$$
in degree $n$ is an isomorphism and hence the non-zero element in $H^n(X;\mathbb Z_2)$ is spherical. 

Assume that $n\neq 2,4,8$. Since $X$ is $(n-1)$-connected it follows from Proposition\,\ref{covering} that $\mathrm{charrank}_X(\xi)=n-1$ for any $\xi$ over $X$. This proves the first equality for $n\neq 2, 4,8$.  

Next, for $X=M(\mathbb Z_2,n)$, we observe that there is a cofiber sequence 
$$S^n\stackrel{f}\longrightarrow S^n \longrightarrow X\longrightarrow S^{n+1}\longrightarrow S^{n+1}$$
where $f$ is a degree $2$ map. This gives rise to an exact sequence 
$$\widetilde{KO}(S^{n+1})\longrightarrow \widetilde{KO}(X)\longrightarrow \widetilde{KO}(S^n)\stackrel{f^*}\longrightarrow \widetilde{KO}(S^n).$$
When $n=4,8$ the homomorphism $f^*$ is injective and hence the homomorphism $\widetilde{KO}(S^{n+1})\longrightarrow \widetilde{KO}(X)$ is surjective. 
When $n=2$, the homomorphism $f^*$ is the zero homomorphism and hence the homomorphism $\widetilde{KO}(X)\longrightarrow \widetilde{KO}(S^n)$ is surjective. 
These obeservations follow from the fact that $\widetilde{KO}(S^4)=\mathbb Z=\widetilde{KO}(S^8)$ and $\widetilde{KO}(S^2)=\mathbb Z_2$ together with the fact that $f$ is a degree $2$ map. 

Thus when $n=4,8$ we have by Theorem\,\ref{atiyah} that $w(\xi)=1$ for any vector bundle over $X=M(\mathbb Z_2,n)$. This completes the proof of the first equality when $n=4,8$. 

Finally let $X=M(\mathbb Z_2,2)$. Then $X$ is a simply connected $3$-dimensional $CW$-complex. We shall show that there exists 
a bundle $\xi$ over $X$ with $w_2(\xi)\neq 0$ and $w_3(\xi)\neq 0$. As the homomorphism $\widetilde{KO}(X)\longrightarrow \widetilde{KO}(S^2)$ is surjective and $w_2(\nu_2)\neq 0$,  
there exists a bundle $\xi$ over $X$ with $w_2(\xi)\neq 0$. For this vector bundle $\xi$ over $X$ the Stiefel-Whitney class $w_3(\xi)\neq 0$. To see this we observe that if $a\in H^1(\mathbb R\mathbb P^2;\mathbb Z_2)= H^1(M(\mathbb Z_2,1);\mathbb Z_2)$ is the unique non-zero element, then
$Sq^1(a)=a^2\neq 0$. Thus, by Wu's formula and the fact that the Steenrod squares commute with the suspension homomorphism we see that 
$Sq^1(w_2(\xi))=w_1(\xi)w_2(\xi)+w_3(\xi)=w_3(\xi)\neq 0$. This completes the proof of the second equality.\end{proof}

\begin{Prop}\label{lastprop}
Let $X$ denote the stunted projective space $\mathbb R\mathbb P^n/\mathbb R\mathbb P^m$ with $1\leq m\leq n-2$. Then 
$$\mathrm{ucharrank}(X)=\left\{\begin{array}{cl}
 m & \mbox{if $m+1\neq 2,4,8$}\\
 m+1 & \mbox{if $m+1=2,4,8$}
\end{array}\right.$$

\end{Prop}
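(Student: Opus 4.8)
The plan is to separate cases according to whether $m+1\in\{2,4,8\}$. I first record the basic data: $H^*(X;\mathbb{Z}_2)$ has $\mathbb{Z}_2$-basis $a^{m+1},\dots,a^n$ (the images of the powers of the generator $a\in H^1(\mathbb{R}\mathbb{P}^n;\mathbb{Z}_2)$, with $a^ia^j=a^{i+j}$ when $i+j\le n$ and $0$ otherwise), so that $r_X=m+1$, $\dim X=n$, and each of these cohomology groups is one-dimensional. The $(m+1)$-skeleton of $X$ is the bottom sphere $S^{m+1}$, and its inclusion $i\colon S^{m+1}\hookrightarrow X$ induces an isomorphism on $H^{m+1}(-;\mathbb{Z}_2)$ (injective as a skeletal restriction, hence an isomorphism between two copies of $\mathbb{Z}_2$); thus the generator $a^{m+1}$ is spherical. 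The inequality $r_X-1\le\mathrm{ucharrank}(X)$ recorded in Section 2 already gives the lower bound $\mathrm{ucharrank}(X)\ge m$.

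Suppose first that $m+1\neq 2,4,8$. Since $m\ge 1$ forces $m+1\ge 2$, the spherical class $a^{m+1}$ sits in a degree lying outside $\{1,2,4,8\}$, so Proposition \ref{covering} yields $\mathrm{charrank}_X(\xi)<m+1$ for every $\xi$, i.e. $\mathrm{ucharrank}(X)\le m$. Combined with the lower bound this gives $\mathrm{ucharrank}(X)=m$. (This disposes in particular of every power of $2$ exceeding $8$; when $m+1$ is not a power of $2$ one could alternatively quote Lemma \ref{secondlemma}(2).)

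Now let $m+1\in\{2,4,8\}$, so that $m\in\{1,3,7\}$ is odd. For the upper bound $\mathrm{ucharrank}(X)\le m+1$ I would argue by contradiction. If $\mathrm{charrank}_X(\xi)\ge m+2$ for some $\xi$, then, because $w_j(\xi)\in H^j(X;\mathbb{Z}_2)=0$ for $1\le j\le m$ while any product of two or more of the remaining classes lands in degree $\ge 2(m+1)>m+2$, the generators $a^{m+1}$ and $a^{m+2}$ (note $m+2\le n$) are forced to equal $w_{m+1}(\xi)$ and $w_{m+2}(\xi)$. Applying $\mathrm{Sq}^1$ and using $w_1(\xi)=0$, Wu's formula gives $\mathrm{Sq}^1 w_{m+1}(\xi)=m\,w_{m+2}(\xi)=a^{m+2}$ since $m$ is odd, whereas in $H^*(X;\mathbb{Z}_2)$ one has $\mathrm{Sq}^1 a^{m+1}=(m+1)a^{m+2}=0$ since $m+1$ is even. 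As $a^{m+2}\neq 0$ this is a contradiction, so $\mathrm{ucharrank}(X)\le m+1$.

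It remains to exhibit a bundle with $\mathrm{charrank}_X(\xi)\ge m+1$, equivalently with $w_{m+1}(\xi)=a^{m+1}\neq 0$; this realization step is where I expect the genuine difficulty, since obstructing a class is far easier than producing one. The plan is to descend along the collapse $p\colon\mathbb{R}\mathbb{P}^n\to X$. Over $\mathbb{R}\mathbb{P}^n$ the bundle $(m+1)\gamma$ has total class $(1+a)^{m+1}=1+a^{m+1}$ (because $m+1$ is a power of $2$), so the reduced class $(m+1)(\gamma-1)\in\widetilde{KO}(\mathbb{R}\mathbb{P}^n)$ has degree-$(m+1)$ Stiefel--Whitney class $a^{m+1}$. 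From the cofibration $\mathbb{R}\mathbb{P}^m\hookrightarrow\mathbb{R}\mathbb{P}^n\xrightarrow{p}X$ this class lies in the image of $p^*\colon\widetilde{KO}(X)\to\widetilde{KO}(\mathbb{R}\mathbb{P}^n)$ as soon as it restricts to $0$ on $\mathbb{R}\mathbb{P}^m$. The decisive input is that for $m\in\{1,3,7\}$ one has $\widetilde{KO}(\mathbb{R}\mathbb{P}^m)\cong\mathbb{Z}/(m+1)$, generated by $\gamma-1$, whence $(m+1)(\gamma-1)=0$ there. Choosing a genuine bundle $\xi$ with $p^*\xi=(m+1)(\gamma-1)$ (add a trivial summand, which does not change Stiefel--Whitney classes) and using that $p^*$ is injective on $H^{m+1}$ and sends $a^{m+1}$ to $a^{m+1}$, we obtain $w_{m+1}(\xi)=a^{m+1}\neq 0$, so $\mathrm{ucharrank}(X)\ge m+1$ and finally $\mathrm{ucharrank}(X)=m+1$. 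The hard part is thus concentrated in the computation $\widetilde{KO}(\mathbb{R}\mathbb{P}^m)\cong\mathbb{Z}/(m+1)$ for these three values (Adams' $KO$-theory of real projective spaces): it says precisely that $m+1=2^s$ annihilates $\widetilde{KO}(\mathbb{R}\mathbb{P}^m)$ when $s\le 3$, and its failure for larger powers of $2$ is exactly why those fall under the first case.
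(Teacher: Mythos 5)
Your argument is correct, and for the case $m+1\in\{2,4,8\}$ it takes a genuinely different route from the paper on both halves. For the upper bound the paper exploits the splitting $\mathbb{R}\mathbb{P}^{m+2}/\mathbb{R}\mathbb{P}^m\simeq S^{m+1}\vee S^{m+2}$ (valid because $m+2$ is odd) to produce a spherical class in degree $m+2\in\{3,5,9\}$ and then quotes Proposition~\ref{covering}; you instead rule out $\mathrm{charrank}_X(\xi)\geq m+2$ by the Wu formula $Sq^1w_{m+1}=w_1w_{m+1}+m\,w_{m+2}$, which is a clean, self-contained cohomological contradiction (and echoes the $Sq^1$ trick the paper itself uses for $M(\mathbb{Z}_2,2)$ in Proposition~\ref{secondlastprop}). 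For the realization step the paper extends the Hopf bundle $\nu_{m+1}$ over the wedge and invokes Adams' theorem that restriction $\widetilde{KO}(\mathbb{R}\mathbb{P}^n/\mathbb{R}\mathbb{P}^m)\to\widetilde{KO}(\mathbb{R}\mathbb{P}^{m+2}/\mathbb{R}\mathbb{P}^m)$ is epi, i.e.\ it pushes a bundle up from a small subcomplex; you instead pull $(m+1)\gamma$ down from $\mathbb{R}\mathbb{P}^n$ through the cofibration exact sequence, the key input being Adams' computation $\widetilde{KO}(\mathbb{R}\mathbb{P}^m)\cong\mathbb{Z}/(m+1)$ for $m=1,3,7$. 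Both routes ultimately rest on Adams' $KO$-theory of (stunted) projective spaces, but on different statements from it; your version has the merit of making transparent exactly where the hypothesis $m+1=2^s$, $s\leq 3$, enters (annihilation of $\widetilde{KO}(\mathbb{R}\mathbb{P}^m)$ by $m+1$), while the paper's wedge argument is slightly more economical in that a single geometric splitting delivers both the obstruction and the construction. The case $m+1\neq 2,4,8$ is handled exactly as in the paper.
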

\begin{proof} The stunted projective space $X$ is $m$-connected with $(m+1)$-skeleton $X^{(m+1)}=S^{m+1}$. If $f:S^{m+1}=X^{(m+1)}\longrightarrow X$ denotes the inclusion map, then it is easy to check that the homomorphism 
$$f^*:H^{m+1}(X;\mathbb Z_2)\longrightarrow H^{m+1}(S^{m+1};\mathbb Z_2)$$ is an isomorphism. Thus, the non-zero element in $H^{m+1}(X;\mathbb Z_2)$ is spherical. 
The first equality of the proposition now follows from Proposition\,\ref{covering}. 

Let $X=\mathbb R\mathbb P^n/\mathbb R\mathbb P^m$ with $m+1=2,4,8$. It is clear that the inclusion map 
$$\mathbb R\mathbb P^{m+2}/\mathbb R\mathbb P^m \longrightarrow \mathbb R\mathbb P^n/\mathbb R\mathbb P^m$$
where $n\geq m+2$ induces isomorphism in $\mathbb Z_2$-cohomology in degree $i$ for all $i\leq m+2$. Since $(m+2)$ is odd we have a splitting 
$$\mathbb R\mathbb P^{m+2}/\mathbb R\mathbb P^m = S^{m+2}\vee S^{m+1}.$$
It follows that $X$ has a spherical class in degree $(m+2)$ and hence by Proposition\,\ref{covering} we have $\mathrm{ucharrank}(X)\leq m+1$. 
We shall prove the equality by showing that there exists a bundle $\xi$ over $X$ with $w_{m+1}(\xi)\neq 0$. 

As $\mathbb R\mathbb P^{m+2}/\mathbb R\mathbb P^m=S^{m+1}\vee S^{m+2}$, the Hopf bundle $\nu_{m+1}$ over $S^{m+1}$ extends over $S^{m+1}\vee S^{m+2}$ to give a vector bundle $\xi$ with $w_{m+1}(\xi)\neq 0$. It is well known \cite{adams} that for any $n\geq m+2$ the inclusion map 
$$\mathbb R\mathbb P^{m+2}/\mathbb R\mathbb P^m\hookrightarrow \mathbb R\mathbb P^n/\mathbb R \mathbb P^m$$
induces an epimorphism in reduced $KO$-groups. Thus there is a vector bundle over $\mathbb R\mathbb P^n/\mathbb R \mathbb P^m$ with the required property. \end{proof}

\begin{acknowledgement} We are indebted to Professor J. Korba\v{s} for his detailed and helpful comments on an earlier draft of this manuscript. In particular, we thank him for showing us the proof of Corollary\,\ref{lens1}. The original statement of the corollary only contained the conclusion that $\mathrm{ucharrank}(X)<d$, under the assumption that $X$ is orientable and $d\neq 1,2,4,8$. We also thank him for sending us a copy of his paper \cite{korbas}. We would like to thank the anonymous referee for his detailed suggestions. In particular, we thank him for showing us the proof of Proposition\,\ref{secondlastprop}. This is shorter and stronger than proof given by the authors. 
\end{acknowledgement}

\end{document}